\documentclass[11pt]{article}

\usepackage{geometry}
\geometry{a4paper}

\usepackage{cite}
\usepackage{graphicx}
\usepackage{bm}
\usepackage{booktabs} 
\usepackage{array} 
\usepackage{paralist} 
\usepackage{verbatim} 
\usepackage{subfig} 
\usepackage{tabularx}
\usepackage{fancyhdr} 
\pagestyle{fancy} 

\lhead{}\chead{}\rhead{}
\lfoot{}\cfoot{\thepage}\rfoot{}

\usepackage[nottoc,notlof,notlot]{tocbibind}
\usepackage[titles,subfigure]{tocloft}

\usepackage{amsmath,amsfonts,amsthm,mathrsfs,amssymb,cite}
\usepackage[usenames]{color}

\def\BC{\mathbb C}
\def\BN{\mathbb N}
\def\BR{\mathbb R}

\def\cD{\mathcal D}

\def\rd{\mathrm d}
\def\rRe{\mathrm{Re}}

\def\e{\mathrm e}

\def\ri{\mathrm i}

\def\supp{\mathrm{supp}}

\def\Ga{\Gamma}

\def\Om{\Omega}
\def\al{\alpha}
\def\be{\beta}
\def\ga{\gamma}
\def\de{\delta}
\def\ep{\epsilon}
\def\ve{\varepsilon}

\def\la{\lambda}
\def\si{\sigma}
\def\vp{\varphi}
\def\om{\omega}
\def\f{\frac}
\def\nb{\nabla}
\def\ov{\overline}
\def\pa{\partial}

\def\wt{\widetilde}
\def\tri{\triangle}

\newtheoremstyle{thmstyle}
  {6pt}
  {6pt}
  {\it}
  {}
  {\bf}
  {}
  {.5em}
  {}
\newtheoremstyle{remstyle}
  {6pt}
  {6pt}
  {\rm}
  {}
  {\bf}
  {}
  {.5em}
  {}

\theoremstyle{thmstyle}
\newtheorem{thm}{\indent Theorem}[section]
\newtheorem{lem}[thm]{\indent Lemma}

\newtheorem{prob}[thm]{\indent Problem}
\theoremstyle{remstyle}
\newtheorem{rem}[thm]{\indent Remark}

\allowdisplaybreaks[4]

\title{\bf Inverse moving source problem for 
time-fractional evolution equations: Determination of profiles}

\author{Yikan Liu\thanks{Research Center of Mathematics for Social Creativity, Research Institute for Electronic Science, Hokkaido University, N12W7, Kita-Ward, Sapporo 060-0812, Japan. Email: {\tt ykliu@es.hokudai.ac.jp}},
Guanghui Hu\thanks{School of Mathematical Sciences and LPMC, Nankai University, Tianjin 300071, P.R. China. Email: {\tt ghhu@nankai.edu.cn}},
Masahiro Yamamoto\thanks{Graduate School of Mathematical Sciences, The University of Tokyo, 3-8-1 Komaba, Meguro-ku, Tokyo 153-8914, Japan; Honorary Member of Academy of Romanian Scientists, Ilfov, nr.\! 3, Bucuresti, Romania; Correspondence member of Accademia Peloritana dei Pericolanti, Palazzo Universit\`a, Piazza S. Pugliatti 1 98122 Messina, Italy; Peoples' Friendship University of Russia (RUDN University), 6 Miklukho-Maklaya Street, Moscow 117198, Russian Federation. Email: {\tt myama@ms.u-tokyo.ac.jp}}}

\date{}


\begin{document}
\maketitle

\begin{abstract}
This article is concerned with two inverse problems on determining moving source profile functions in evolution equations with a derivative order $\al\in(0,2]$ in time. In the first problem, the sources are supposed to move along known straight lines, and we suitably choose partial interior observation data in finite time. Reducing the problems to the determination of initial values, we prove the unique determination of one and two moving source profiles for $0<\al\le1$ and $1<\al\le2$, respectively. In the second problem, the orbits of moving sources are assumed to be known, and we consider the full lateral Cauchy data. At the cost of infinite observation time, we prove the unique determination of one moving source profile by constructing test functions.
\vskip 4.5mm
\noindent\begin{tabular}{@{}l@{ }p{10cm}} {\bf Keywords } & inverse moving source problem, time-fractional evolution equation, vanishing property, uniqueness
\end{tabular}

\vskip 4.5mm

\noindent{\bf AMS Subject Classifications } 35R11, 35R30, 35B60
\end{abstract}
\section{Introduction}

Let $0<\al\le2$, $T>0$ and $\Om\subset\BR^d$ ($d\ge2$) be a bounded domain with a smooth boundary $\pa\Om$. We may consider $T=\infty$ in some cases. Consider an initial-boundary value problem for a time-fractional evolution equation
\begin{equation}\label{eq-IBVP-u}
\begin{cases}
(\pa_{0+}^\al-\tri)u=F & \mbox{in }\Om\times(0,T),\\
u=\pa_t^{\lceil\al\rceil-1}u=0 & \mbox{in }\Om\times\{0\},\\
u=0 & \mbox{on }\pa\Om\times(0,T),
\end{cases}
\end{equation}
which may be a parabolic or a hyperbolic equation, that is, $\al=1$ or $\al=2$. Here $\tri:=\sum_{j=1}^d\f{\pa^2}{\pa x_j^2}$ denotes the usual Laplacian with respect to $\bm x$ and 
\[
\lceil\al\rceil=\begin{cases}
n & \mbox{if $n-1<\al< n$ where $n\in\BN$,}\\
\al & \mbox{if $\al\in\BN$}.
\end{cases}
\]
The notation $\pa_{0+}^\al$ stands for the forward Caputo derivative in time, which will be defined precisely in Section \ref{sec-premain}. The equation \eqref{eq-IBVP-u} is called a time-fractional diffusion equation when $\al\in(0,1)$, whereas it is called a time-fractional wave equation when $\al\in(1,2)$. In practice, the Caputo derivative has been extensively used to describe non-Fickian dispersion, skewness and long-tailed profiles which are poorly modeled by integer derivatives (see e.g.\! \cite{BWM00,LB03}). Correspondingly, the fractional evolution equation \eqref{eq-IBVP-u} has become a powerful candidate in modeling e.g.\! anomalous diffusion in heterogeneous media and complex viscoelastic materials (see \cite{AG92,BDES18,GCR92,HH98} and the references therein).

In the first part of this paper, the source term $F$ in \eqref{eq-IBVP-u} is assumed to take the form
\begin{equation}\label{eq-def-F}
F(\bm x,t):=\begin{cases}
f(\bm x-\bm p t), & 0<\al\le1,\\
f(\bm x-\bm p t)+g(\bm x-\bm q t), & 1<\al\le2,
\end{cases}
\end{equation}
where $\bm p,\bm q\in\BR^d$ are constant vectors. Throughout this paper, we assume
\[
f,g\mbox{ are compactly supported in }B_{\de_0}:=\{\bm x\in\BR^d;\,|\bm x|<\de_0\}\subset\Om
\]
for some $\de_0>0$, whose regularity will be specified later in Section \ref{sec-premain}. Then for $0<\al\le1$, the function $F$ describes a radiating source which moves along the direction $\bm p$ with the source profile function $f$ and the velocity $\bm p$. For $1<\al\le2$, the function $F$ models two radiating sources moving along the directions $\bm p,\bm q$ whose profiles and velocities are $f,g$ and $\bm p,\bm q$, respectively.  In other words, the function $F(\,\cdot\,,t)$ is compactly supported in $\Om$ for any $t\in(0,T)$, which implies that the moving sources under consideration will not move beyond $\Om$ for any $t\in(0,T)$. Note that in the case $\al=2$, the problem \eqref{eq-IBVP-u} models the acoustic wave propagation in a homogeneous isotropic background medium with the unit wave velocity.

We first consider the following inverse problem on determining one or two moving source profiles.

\begin{prob}\label{prob-IMSP}
Let $u$ be the solution to \eqref{eq-IBVP-u} with \eqref{eq-def-F}, and $\om\subset\Om$ be a suitably chosen nonempty subdomain of $\Om$. Provided that $\bm p,\bm q\in\BR^d$ are known constant vectors such that $\bm p\ne\bm q$, determine one source profile $f$ in the case of $0<\al\le1$ or two source profiles $f,g$ in the case of $1<\al\le2$ in \eqref{eq-def-F} by the partial interior observation of $u$ in $\om\times(0,T)$.
\end{prob}

Problem \ref{prob-IMSP} with $1<\al\le2$ requires the simultaneous determination of $f$ and $g$, which definitely includes the case of determining a single source profile. In this paper, we are concerned with the uniqueness issue of Problem \ref{prob-IMSP}. Due to the linearity of the problem, we assume additionally
\begin{equation}\label{eq-u-0}
u=0\quad\mbox{in }\om\times(0,T).
\end{equation}
Then it suffices to verify $f=g\equiv0$ in $B_{\de_0}$.

In the second part of this paper, the source term $F$ in \eqref{eq-IBVP-u} is assumed to take the form
\begin{equation}\label{eq:15}
F(\bm x,t)=f(\bm x-\bm\rho(t))h(t).
\end{equation}
Here, the function $\bm\rho:[0,\infty)\longrightarrow B_R\subset\Om$ denotes a smooth orbit and $f: \Om\longrightarrow\BR$ the source profile. We assume that $f\in C_0^\infty(\Om)$ is compactly supported over a ball $B_{\de_0}$ for some $\de_0>0$, so that its zero extension in $\BR^d$, still denoted by $f$, belongs to $C_0^\infty(\BR^d)$. Moreover, the temporal function $h$ is assumed to be smooth, non-vanishing and compactly supported on $[0, T_0]$ for some $T_0>0$. Suppose that $B_{R+\de_0}\subset \Om$. Since the source term cannot enter into the exterior of $B_R$, the function $F(\bm x,t)$ is compactly supported in $B_{R+\de_0}\times[0,T_0]$.

\begin{prob}\label{prob-IMSP-2}
Let $u$ be the solution to \eqref{eq-IBVP-u} with \eqref{eq:15} and assume that $\bm\rho$ and $h$ are both known. Determine the moving source profile $f$ in the case of $0<\al\le2$ in \eqref{eq:15} by the full lateral Cauchy data $(u,\pa_{\bm\nu}u)|_{\pa\Om\times(0,\infty)}$.
\end{prob}

In comparison with the first inverse problem, the orbit function $\bm\rho$ appearing in Problem \ref{prob-IMSP-2} is not restricted to the class of straight lines. However, the dynamical Cauchy data for all $t\in[0,\infty)$ is needed, because our argument relies heavily on the Laplace transform of the model and data. We shall prove the unique determination of the compactly supported source profile $f$.

In the past two decades, time-fractional evolution equations represented by \eqref{eq-IBVP-u} with $\al\in(0,1)\cup(1,2)$ have gathered increasing popularity among researchers from multiple disciplines owing to their outstanding flexibility in modeling various nonlocal phenomena. Mathematically, a series of fundamental and important results about time-fractional evolution equations, featured by well-posedness, asymptotic behavior, time-analyticity etc.\! of solutions, have been established in recent years; see \cite{EK04,L09,SY11a,GLY15,LRY16,KY18} as a partial list. Along with the completeness of theories for forward problems, inverse problems for time-fractional evolution equations have also been studied intensively from both theoretical and numerical aspects, and we refer to the review articles \cite{LiuLiY19,LiLiuY19,LiY19} as well as the references therein. Here we do not intend any comprehensive list of references.

Due to the practical significance, a lot of works have been devoted to inverse source problems for time-fractional diffusion equations, among which the majority assume that the inhomogeneous term takes the form of (partial) separated variables. We refer e.g.\! to \cite{LRY16,FK16,LZ17} and \cite{SY11b,JLLY17} for the determination of temporal and spatial components, respectively. It reveals that the treatments for the above inverse problems relies heavily on some properties of forward problems, and mostly it is technically difficult to obtain stability results because of the non-locality of time-fractional derivatives.

As a special branch of inverse source problems, there are several papers on inverse moving source problems, most of which are concerned with determining moving orbits in hyperbolic equations. In \cite{NIO12,O20}, algebraic procedures were applied to identify moving point or dipole sources. In \cite{HKLZ2019}, the authors considered inverse problems arising from the Maxwell system (that is, $\al=2$) for recovering moving source profile (respectively orbit) from boundary surface data, if a priori information on the source orbit (respectively profile) is available. In our previous work \cite{HLY20}, stability and uniqueness for fractional diffusion(-wave) equations ($0<\al\le2$) in determining the moving orbit were derived using observation data at multiple interior points, provided that the moving source profile is given. Unlike the above mentioned problems, in this paper we deal with the moving source taking the form of \eqref{eq-def-F}, where the a priori information of $\bm p$ and $\bm q$ means that we know moving orbits (directions) of the sources. The aim of this paper is to identify one or two unknown moving source profiles which do not change in the time variable. To the best of our knowledge, there seems no literature in this respect for time-fractional evolution equations.

The remaining part of this paper is organized as follows. In Section \ref{sec-premain}, we first fix notations and terminologies for fractional equations, and then state well-posedness and regularity results (Lemma \ref{lem-IBVP-u}) of the forward problem \eqref{eq-IBVP-u}--\eqref{eq-def-F} together with a unique continuation property for fractional equations with $1<\al<2$ (Lemma \ref{lem-UCP}). Our main uniqueness results for Problems \ref{prob-IMSP} and \ref{prob-IMSP-2} will be presented in Theorems \ref{thm-unique} and \ref{TH:4}, respectively. Section \ref{sec-forward} is devoted to the proofs of Lemmas \ref{lem-IBVP-u}--\ref{lem-asymp}. The proofs of Theorems \ref{thm-unique} and \ref{TH:4} will be carried out in Sections \ref{sec-proof} and \ref{sec:5}, respectively. Finally, some concluding remarks are given in Section \ref{sec-conclusion}.

\section{Preliminaries and main results}\label{sec-premain}

To start with, we recall the Riemann-Liouville integral operator for $\be\in[0,1]$:
\[
J_{0+}^\be h(t):=\left\{\!\begin{alignedat}{2}
& h(t), & \quad & \be=0,\\
& \f1{\Ga(\be)}\int_0^t\f{h(\tau)}{(t-\tau)^{1-\be}}\,\rd\tau, & \quad & 0<\be\le1,
\end{alignedat}\right.\quad h\in C[0,\infty),
\]
where $\Ga(\,\cdot\,)$ is the Gamma function. Then for $\be>0$, the Caputo derivative $\pa_{0+}^\be$ and the Riemann-Liouville derivative $D_{0+}^\be$ can be formally defined as
\[
\pa_{0+}^\be=J_{0+}^{\lceil\be\rceil-\be}\circ\f{\rd^{\lceil\be\rceil}}{\rd t^{\lceil\be\rceil}},\quad D_{0+}^\al=\f{\rd^{\lceil\be\rceil}}{\rd t^{\lceil\be\rceil}}\circ J_{0+}^{\lceil\be\rceil-\be},
\]
where $\circ$ denotes the composition. Then by direct calculations, we know

\begin{lem}\label{lem-C-RL}
Let $h\in C^\infty[0,\infty)$. Then

{\rm(a)}\ \ For $0<\al<1$, we have $\pa_{0+}^\al h=D_{0+}^\al h$ if $h(0)=0$.

{\rm(b)}\ \ For $1<\al<2$, we have $D_{0+}^\al h=\pa_t J_{0+}^{2-\al}\pa_t h$ if $h(0)=0,$ and $\pa_t J_{0+}^{2-\al}\pa_t h=\pa_{0+}^\al h$ if $h'(0)=0$.
\end{lem}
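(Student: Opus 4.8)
The whole lemma should follow from a single differentiation identity, which I would establish first: for $g\in C^1[0,\infty)$ and $\be\in(0,1)$, writing $J_{0+}^\be g(t)=\f1{\Ga(\be)}\int_0^t s^{\be-1}g(t-s)\,\rd s$ and differentiating under the integral sign (Leibniz rule --- the boundary term at $s=t$ is finite since $t>0$, and $s^{\be-1}$ is integrable near $s=0$ as $\be<1$), one gets
\[
\pa_tJ_{0+}^\be g(t)=\f{g(0)}{\Ga(\be)}\,t^{\be-1}+J_{0+}^\be g'(t);
\]
in particular $\pa_tJ_{0+}^\be g=J_{0+}^\be g'$ whenever $g(0)=0$.

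For part (a), where $\lceil\al\rceil=1$ so that $D_{0+}^\al h=\pa_tJ_{0+}^{1-\al}h$ and $\pa_{0+}^\al h=J_{0+}^{1-\al}h'$, I would apply the identity with $\be=1-\al$ and $g=h$ to obtain
\[
D_{0+}^\al h=\f{h(0)}{\Ga(1-\al)}\,t^{-\al}+J_{0+}^{1-\al}h'=\f{h(0)}{\Ga(1-\al)}\,t^{-\al}+\pa_{0+}^\al h,
\]
which reduces to $\pa_{0+}^\al h$ exactly when $h(0)=0$.

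For part (b), where $\lceil\al\rceil=2$ so that $D_{0+}^\al h=\pa_t^2J_{0+}^{2-\al}h$ and $\pa_{0+}^\al h=J_{0+}^{2-\al}h''$ with $2-\al\in(0,1)$, I would apply the identity with $\be=2-\al$, $g=h$ to get $\pa_tJ_{0+}^{2-\al}h=h(0)\,t^{1-\al}/\Ga(2-\al)+J_{0+}^{2-\al}h'$ and then differentiate once more, using $\pa_t(t^{1-\al})=(1-\al)t^{-\al}$ and $(1-\al)/\Ga(2-\al)=1/\Ga(1-\al)$, to arrive at
\[
D_{0+}^\al h=\f{h(0)}{\Ga(1-\al)}\,t^{-\al}+\pa_tJ_{0+}^{2-\al}\pa_t h,
\]
giving the first assertion when $h(0)=0$. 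For the second assertion I would set $k:=h'$ and invoke the identity with $\be=2-\al$, $g=k$: since $k(0)=h'(0)=0$, it follows that $\pa_tJ_{0+}^{2-\al}\pa_t h=\pa_tJ_{0+}^{2-\al}k=J_{0+}^{2-\al}k'=J_{0+}^{2-\al}h''=\pa_{0+}^\al h$.

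The statement is in the end a formal computation, so I do not expect any genuine obstacle; the only points that call for a little care are justifying the differentiation under the integral sign in the basic identity (which the smoothness of $h$ and the bound $\be<1$ take care of) and handling the Gamma-function relation $\Ga(\be+1)=\be\,\Ga(\be)$ correctly so that the boundary term collapses to $h(0)\,t^{-\al}/\Ga(1-\al)$.
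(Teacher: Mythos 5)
Your proof is correct and is precisely the ``direct calculation'' that the paper alludes to without writing out (the lemma is stated in Section~\ref{sec-premain} with no proof beyond the phrase ``by direct calculations''). The single differentiation identity $\pa_tJ_{0+}^\be g=\f{g(0)}{\Ga(\be)}t^{\be-1}+J_{0+}^\be g'$, together with $\Ga(2-\al)=(1-\al)\Ga(1-\al)$, cleanly yields all three assertions, so there is nothing to add.
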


For later use, we also introduce the backward Riemann-Liouville integral operator for $T>0$ as
\[
J_{T-}^\be h(t):=\left\{\!\begin{alignedat}{2}
& h(t), & \quad & \be=0,\\
& \f1{\Ga(\be)}\int_t^T\f{h(\tau)}{(t-\tau)^{1-\be}}\,\rd\tau, & \quad & 0<\be\le1,
\end{alignedat}\right.\quad h\in C[0,\infty),
\]
by which we further define the corresponding backward Caputo and Riemann-Liouville derivatives with $\be>0$ as
\[
\pa_{T-}^\be=J_{T-}^{\lceil\be\rceil-\be}\circ\f{\rd^{\lceil\be\rceil}}{\rd t^{\lceil\be\rceil}},\quad D_{T-}^\be=\f{\rd^{\lceil\be\rceil}}{\rd t^{\lceil\be\rceil}}\circ J_{T-}^{\lceil\be\rceil-\be}.
\]
In the next lemma, we collect useful formulae connecting forward and backward fractional derivatives from \cite[Lemma 2.1]{L21}.

\begin{lem}\label{lem-frac-int}
Let $h_1,h_2\in C^{\lceil\al\rceil}[0,T]$. If $0<\al\le1,$ then
\begin{equation}\label{eq-frac-int1}
\int_0^T(\pa_{0+}^\al h_1)\,h_2\,\rd t=\left[h_1(J_{T-}^{1-\al}h_2)\right]_0^T-\int_0^T h_1\,(D_{T-}^\al h_2)\,\rd t.
\end{equation}
If $1<\al\le2,$ then
\begin{equation}\label{eq-frac-int2}
\begin{aligned}
& \int_0^T(\pa_{0+}^\al h_1)\,h_2\,\rd t=\left[h_1'(J_{T-}^{2-\al}h_2)\right]_0^T-\int_0^T h_1'\,(D_{T-}^{\al-1}h_2)\,\rd t,\\
& \int_0^T h_1'\,(D_{T-}^{\al-1}h_2)\,\rd t=\left[h_1(D_{T-}^{\al-1}h_2)\right]_0^T-\int_0^T h_1\,(D_{T-}^\al h_2)\,\rd t.
\end{aligned}
\end{equation}
\end{lem}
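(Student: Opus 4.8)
The plan is to reduce both identities to two elementary facts: the duality between forward and backward Riemann--Liouville fractional integrals, and ordinary integration by parts. The point is that, by the definition of the Caputo derivative, $\pa_{0+}^\al h_1=J_{0+}^{\lceil\al\rceil-\al}h_1^{(\lceil\al\rceil)}$, so the fractional integral standing in front of the top-order classical derivative of $h_1$ can be shifted onto $h_2$ at the cost of replacing $J_{0+}^{\lceil\al\rceil-\al}$ by its backward analogue $J_{T-}^{\lceil\al\rceil-\al}$. Concretely, I would first establish the duality
\[
\int_0^T(J_{0+}^\be\phi)(t)\,\psi(t)\,\rd t=\int_0^T\phi(t)\,(J_{T-}^\be\psi)(t)\,\rd t,\qquad\be\in[0,1],\ \phi,\psi\in C[0,T],
\]
which for $\be\in(0,1]$ follows at once from Fubini's theorem on the triangle $\{(t,\tau);\,0\le\tau\le t\le T\}$ --- legitimate because the kernel $|t-\tau|^{\be-1}$ is integrable there --- and is trivial for $\be=0$.

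For $0<\al\le1$ I would then write $\pa_{0+}^\al h_1=J_{0+}^{1-\al}h_1'$, apply the duality with $\be=1-\al$, $\phi=h_1'$, $\psi=h_2$ to get $\int_0^T(\pa_{0+}^\al h_1)\,h_2\,\rd t=\int_0^T h_1'\,(J_{T-}^{1-\al}h_2)\,\rd t$, integrate by parts once to transfer the derivative from $h_1$ onto $J_{T-}^{1-\al}h_2$, and recognize $\f{\rd}{\rd t}J_{T-}^{1-\al}h_2=D_{T-}^\al h_2$ directly from the definition of the backward Riemann--Liouville derivative; this is \eqref{eq-frac-int1}. For $1<\al\le2$ the same three steps with $\lceil\al\rceil=2$ and $\be=2-\al$ give $\int_0^T(\pa_{0+}^\al h_1)\,h_2\,\rd t=\int_0^T h_1''\,(J_{T-}^{2-\al}h_2)\,\rd t$, and one integration by parts, together with $\f{\rd}{\rd t}J_{T-}^{2-\al}h_2=D_{T-}^{\al-1}h_2$, yields the first line of \eqref{eq-frac-int2}; applying integration by parts once more to $\int_0^T h_1'\,(D_{T-}^{\al-1}h_2)\,\rd t$ and using $\f{\rd}{\rd t}D_{T-}^{\al-1}h_2=\f{\rd^2}{\rd t^2}J_{T-}^{2-\al}h_2=D_{T-}^\al h_2$ gives the second line.

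The one delicate point --- and the main obstacle --- is the endpoint behaviour at $t=T$. Writing $J_{T-}^\be h_2(t)=\f1{\Ga(\be)}\int_0^{T-t}s^{\be-1}h_2(t+s)\,\rd s$ and differentiating produces a term of the form $(T-t)^{\be-1}h_2(T)$, so the backward Riemann--Liouville derivatives $D_{T-}^\al h_2$ (and, in the wave case, $D_{T-}^{\al-1}h_2$) are in general unbounded as $t\to T^-$, although still Lebesgue integrable on $(0,T)$ since the singularity exponent exceeds $-1$; on the other hand $J_{T-}^\be h_2$ is continuous on $[0,T]$ and vanishes at $t=T$ when $\be>0$. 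I would therefore carry out each integration by parts first on $[0,T-\ve]$ and then let $\ve\downarrow0$, verifying that every term has a finite limit: the boundary contribution at $t=T$ involving $J_{T-}^{\lceil\al\rceil-\al}h_2$ vanishes, while the bracket $[h_1\,(D_{T-}^{\al-1}h_2)]_0^T$ in the second line of \eqref{eq-frac-int2} is to be understood at $t=T$ as the (existing) limit $\lim_{t\to T^-}h_1(t)(D_{T-}^{\al-1}h_2)(t)$. Everything else uses only $h_1,h_2\in C^{\lceil\al\rceil}[0,T]$ for the classical integration by parts; alternatively, one may simply invoke \cite[Lemma 2.1]{L21}, from which the statement is quoted.
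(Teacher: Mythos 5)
The paper itself offers no proof of this lemma: it is quoted verbatim from \cite[Lemma 2.1]{L21}, so there is no internal argument to compare against. Your strategy --- the Fubini duality $\int_0^T(J_{0+}^\be\phi)\psi\,\rd t=\int_0^T\phi\,(J_{T-}^\be\psi)\,\rd t$ applied to $\pa_{0+}^\al h_1=J_{0+}^{\lceil\al\rceil-\al}h_1^{(\lceil\al\rceil)}$, followed by classical integration by parts on $[0,T-\ve]$ and the identities $\f{\rd}{\rd t}J_{T-}^{1-\al}h_2=D_{T-}^\al h_2$, $\f{\rd}{\rd t}J_{T-}^{2-\al}h_2=D_{T-}^{\al-1}h_2$ (valid with the paper's sign convention for $D_{T-}^\be$, which carries no factor $(-1)^{\lceil\be\rceil}$) --- is the standard and correct route, and it fully establishes \eqref{eq-frac-int1} and the first identity of \eqref{eq-frac-int2}: there the only endpoint terms involve $J_{T-}^{\lceil\al\rceil-\al}h_2(T)=0$ and integrands of size $(T-t)^{-\al}$ (for $\al<1$) or $(T-t)^{1-\al}$ (for $\al<2$), which are integrable.

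The one genuine flaw is your assertion that $\lim_{t\to T^-}h_1(t)\,(D_{T-}^{\al-1}h_2)(t)$ exists for the second identity of \eqref{eq-frac-int2}. A direct computation gives
\begin{equation*}
D_{T-}^{\al-1}h_2(t)=\f{-h_2(T)}{\Ga(2-\al)}(T-t)^{1-\al}+\f1{\Ga(2-\al)}\int_t^T(\tau-t)^{1-\al}h_2'(\tau)\,\rd\tau,
\end{equation*}
so for $1<\al<2$ this blows up as $t\to T^-$ whenever $h_2(T)\ne0$ (e.g.\ $h_1=h_2\equiv1$, $\al=\f32$), and correspondingly $D_{T-}^\al h_2\sim(T-t)^{-\al}$ fails to be integrable, so the two terms on the right of the second identity are individually divergent under the stated hypothesis $h_1,h_2\in C^2[0,T]$ alone. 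Your $\ve$-truncation does show that the \emph{combination} $\bigl[h_1(D_{T-}^{\al-1}h_2)\bigr]_0^{T-\ve}-\int_0^{T-\ve}h_1(D_{T-}^\al h_2)\,\rd t$ converges (its limit equals the convergent left-hand side), but to turn this into the identity as written you must either add the hypothesis $h_2(T)=0$ --- which is what actually holds for the test functions $v_T$ in Section \ref{sec:5}, where $J_{T-}^{2-\al}v_T(\cdot,T)=0$ and $D_{T-}^{\al-1}v_T(\cdot,T)=-\e^{-\ri\,\bm\xi\cdot\bm x}$ is finite --- or state explicitly that the right-hand side is to be read as the limit of the truncated combination. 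This defect is inherited from the lemma's own loose formulation, but as written your claim that "every term has a finite limit" is false in general and should be repaired in one of these two ways.
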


For the solution expression, we invoke the Mittag-Leffler function
\[
E_{\al,\be}(z):=\sum_{k=0}^\infty\f{z^k}{\Ga(\al k+\be)},\quad z\in\BC,\ \al>0,\ \be\in\BR,
\]
which satisfies the frequently used estimate (e.g., Podlubny \cite[Theorem 1.5]{P99}):
\begin{equation}\label{eq-est-ML}
|E_{\al,\be}(-\eta)|\le\f C{1+\eta},\quad\eta\ge0,\ 0<\al<2,\ \be>0.
\end{equation}

Let $L^2(\Om)$ denote the usual $L^2$-space in $\Om$ equipped with the inner product $(\,\cdot\,,\,\cdot\,)$, and let $H_0^1(\Om)$, $H^2(\Om)$ etc.\! be the standard $L^2$-based Sobolev spaces (e.g., Adams \cite{A75}). Fixing the domain of $-\tri$ as $\cD(-\tri):=H^2(\Om)\cap H_0^1(\Om)$, we know that there exists an eigensystem $\{(\la_n,\vp_n)\}_{n=1}^\infty$ of $-\tri$ with the homogeneous Dirichlet boundary condition such that
\[
-\tri\vp_n=\la_n\vp_n,\quad0<\la_1\le\la_2\le\cdots,\quad\la_n\to\infty\mbox{ as }n\to\infty,
\]
and $\{\vp_n\}$ forms a complete orthonormal system of $L^2(\Om)$. Here we number $\la_n$ with their multiplicities. As usual, we can introduce the fractional power $(-\tri)^\ga$ for $\ga\ge0$ as
\[
\cD((-\tri)^\ga):=\left\{h\in L^2(\Om);\sum_{n=1}^\infty|\la_n^\ga(h,\vp_n)|^2<\infty\right\},\quad(-\tri)^\ga h:=\sum_{n=1}^\infty\la_n^\ga(h,\vp_n)\vp_n.
\]
Then $\cD((-\tri)^\ga)$ with $\ga\ge0$ is a Hilbert space equipped with the norm
\[
\|h\|_{\cD((-\tri)^\ga)}:=\left(\sum_{n=1}^\infty|\la_n^\ga(h,\vp_n)|^2\right)^{\f12},\quad h\in\cD((-\tri)^\ga).
\]
Furthermore, there holds $\cD((-\tri)^\ga)\subset H^{2\ga}(\Om)$ for $\ga\ge0$ and especially $\cD((-\tri)^{\f12})=H_0^1(\Om)$. Finally, for a Banach space $X$ and $1\le p\le\infty$, we say that $\Psi\in L^p(0,T;X)$ if
\[
\|\Psi\|_{L^p(0,T;X)}:=\left\{\!\begin{alignedat}{2}
& \left(\int_0^T\|\Psi(\,\cdot\,,t)\|_X^p\right)^{\f1p} & \quad & \mbox{if }1\le p<\infty\\
& \mathop{\mathrm{ess}\sup}_{0<t<T}\|\Psi(\,\cdot\,,t)\|_X & \quad & \mbox{if }p=\infty
\end{alignedat}\right\}<\infty.
\]

Throughout this paper, we assume $f,g\in\cD((-\tri)^{\f{\lceil\al\rceil}2})$ for the source profiles in \eqref{eq-def-F}, i.e., $f\in H_0^1(\Om)$ when $0<\al\le1$ and $f,g\in H^2(\Om)\cap H_0^1(\Om)$ when $1<\al\le2$. For later use, we collect the well-posedness and regularity results of problem \eqref{eq-IBVP-u}--\eqref{eq-def-F} in the following lemma.

\begin{lem}\label{lem-IBVP-u}
Let $f,g\in\cD((-\tri)^{\f{\lceil\al\rceil}2}),$ fix $\ve\in(0,1]$ arbitrarily for $0<\al<2$ and fix $\ve=\f12$ for $\al=2$. Then the following statements hold true.

{\rm(a)} There exists a unique solution $u\in L^\infty(0,T;\cD((-\tri)^{\f{\lceil\al\rceil}2+1-\ve}))$ to \eqref{eq-IBVP-u}--\eqref{eq-def-F} such that $u(\,\cdot\,,t)\longrightarrow0$ in $\cD((-\tri)^{\f{\lceil\al\rceil}2+1-\ve})$ as $t\to0$.

{\rm(b)} If $0<\al\le1,$ then $\pa_t u\in L^1(0,T;\cD((-\tri)^{1-\ve}))$.

{\rm(c)} If $1<\al\le2,$ then
\[
\pa_t u\in L^\infty(0,T;\cD((-\tri)^{2-\f1\al-\ve})),\quad\pa_t^2u\in L^1(0,T;\cD((-\tri)^{\f32-\f1\al-\ve}))
\]
and $\pa_t u(\,\cdot\,,t)\longrightarrow0$ in $\cD((-\tri)^{2-\f1\al-\ve})$ as $t\to0$.
\end{lem}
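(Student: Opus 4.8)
The plan is to run the standard eigenfunction-expansion argument for time-fractional diffusion/wave equations, the only extra bookkeeping coming from the moving source. Expanding $u(\cdot,t)=\sum_{n=1}^\infty u_n(t)\vp_n$ with $u_n(t):=(u(\cdot,t),\vp_n)$ and pairing \eqref{eq-IBVP-u} with $\vp_n$ reduces the problem to the scalar fractional ODEs $\pa_{0+}^\al u_n+\la_n u_n=F_n(t)$, $u_n(0)=0$ (together with $u_n'(0)=0$ when $1<\al\le2$), where $F_n(t):=(F(\cdot,t),\vp_n)$. These are solved by the Duhamel-type formula
\[
u_n(t)=\int_0^t k_n(t-\tau)\,F_n(\tau)\,\rd\tau,\qquad k_n(s):=s^{\al-1}E_{\al,\al}(-\la_n s^\al)\quad(0<\al<2),
\]
with $k_n(s)=\la_n^{-1/2}\sin(\sqrt{\la_n}\,s)$ in the limiting case $\al=2$. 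Existence and uniqueness then follow once the series for $u$ and its claimed derivatives are shown to converge in the asserted spaces, since a solution with $F\equiv0$ has all $u_n\equiv0$; thus everything reduces to estimating series of the form $\sum_n\la_n^{2\ga}|u_n(t)|^2$ and their time-differentiated analogues.

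The first ingredient is uniform-in-$t$ control of the source. By \eqref{eq-def-F}, $F(\cdot,t)$ is a sum of translates of $f$ and $g$, $\pa_t F(\cdot,t)$ a sum of translates of $\nb f,\nb g$, and $\pa_t^2F(\cdot,t)$ a sum of translates of $\nb^2 f,\nb^2 g$. Since the moving sources stay compactly supported inside $\Om$ for all $t$, every such translate is again compactly supported in $\Om$, and Sobolev norms of compactly supported functions are translation invariant; as the only exponents that occur are $\tfrac12$ and $1$, for which $\cD((-\tri)^\ga)$ coincides (with equivalent norms) with $H_0^1(\Om)$ and $H^2(\Om)\cap H_0^1(\Om)$, one obtains
\[
\sup_{0<t<T}\Bigl(\|F(\cdot,t)\|_{\cD((-\tri)^{\f{\lceil\al\rceil}2})}+\|\pa_t F(\cdot,t)\|_{\cD((-\tri)^{\f{\lceil\al\rceil-1}2})}\Bigr)\le C\Bigl(\|f\|_{\cD((-\tri)^{\f{\lceil\al\rceil}2})}+\|g\|_{\cD((-\tri)^{\f{\lceil\al\rceil}2})}\Bigr)
\]
(the $g$-terms absent when $0<\al\le1$), together with $\sup_t\|\pa_t^2F(\cdot,t)\|_{L^2(\Om)}<\infty$ when $1<\al\le2$; this is exactly where the hypothesis $f,g\in\cD((-\tri)^{\f{\lceil\al\rceil}2})$ enters.

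The second ingredient is the kernel bound: by \eqref{eq-est-ML}, for $0<\al<2$ and any $\te\in[0,1]$,
\[
\la_n^\te|k_n(s)|\le C\,\f{\la_n^\te s^{\al-1}}{1+\la_n s^\al}\le C\,s^{\al-1-\al\te},\qquad \la_n^\te|k_n'(s)|\le C\,s^{\al-2-\al\te},
\]
the second estimate using $k_n'(s)=s^{\al-2}E_{\al,\al-1}(-\la_n s^\al)$ and \eqref{eq-est-ML} again (applicable since $\al-1>0$ whenever $k_n'$ is needed, i.e.\! for $1<\al\le2$). Inserting the expansion into $\sum_n\la_n^{2\ga}|u_n(t)|^2$, applying Minkowski's integral inequality in $\tau$, and taking $\te=1-\ve$ so that $\la_n^\te k_n$ is both square-summable against the source in $n$ and integrable in $t$ (integrability forcing $\te<1$, whence the arbitrarily small loss $\ve>0$), gives for every $0<\al<2$
\[
\|u(\cdot,t)\|_{\cD((-\tri)^{\f{\lceil\al\rceil}2+1-\ve})}\le C\,t^{\al\ve},
\]
which proves (a), including the convergence to $0$ as $t\to0$. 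For (b) and (c) one differentiates the Duhamel formula, producing the boundary contributions $k_n(t)F_n(0)$ (and, for $\pa_t^2u$, also $k_n'(t)F_n(0)+k_n(t)F_n'(0)$) plus convolutions of $k_n$ against $F_n'$, $F_n''$; each piece is estimated by the same device with a suitable $\te$, the source bounds above, and Young's inequality, reproducing the orders $2-\f1\al-\ve$ for $\pa_t u$ and $\f32-\f1\al-\ve$ for $\pa_t^2u$. For the genuinely hyperbolic case $\al=2$ the kernel has no decay, so one instead integrates by parts once (resp.\! twice) in $\tau$, trading a factor $\la_n$ for a $\tau$-derivative of $F_n$ — e.g.\! $\la_n u_n(t)=F_n(t)-\cos(\sqrt{\la_n}\,t)F_n(0)-\int_0^t\cos(\sqrt{\la_n}(t-\tau))F_n'(\tau)\,\rd\tau$ — which yields the $\cD((-\tri)^{3/2})$- and $\cD((-\tri)^1)$-regularity with the fixed loss $\ve=\tfrac12$, continuity at $t=0$ following from $|1-\cos(\sqrt{\la_n}\,t)|\le C\sqrt{\la_n}\,t$.

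The hard part will be getting the sharp exponents in this last step right: the kernel $k_n(s)$ carries a temporal singularity $s^{\al-1}$ (and $s^{\al-2}$ after one differentiation) while simultaneously supplying a spatial smoothing of order $2$ through the Mittag-Leffler decay, and one must apportion that smoothing between $n$-summability and $t$-integrability via the correct choice of the interpolation parameter $\te$. It is precisely the failure of $t$-integrability at $\te=1$ that is responsible for the arbitrarily small loss $\ve$ when $\al<2$ and for the separate, non-smoothing treatment (with the unavoidable loss $\ve=\tfrac12$) when $\al=2$; absorbing the loss of one temporal power in the bounds for $\pa_t u$ and $\pa_t^2u$ is what forces the drop from $L^\infty$ to $L^1$ in time.
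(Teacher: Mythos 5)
Your proposal is correct and follows essentially the same route as the paper: the same Duhamel representation with kernel $t^{\al-1}E_{\al,\al}(-\la_n t^\al)$, the same Mittag-Leffler bound apportioned between $n$-summability and $t$-integrability via the interpolation exponent (your $\te=1-\ve$ is the paper's $\ga$), the same translation-invariance observation giving $F\in\bigcap_{k}C^k([0,T];\cD((-\tri)^{(\lceil\al\rceil-k)/2}))$, and the same differentiated kernel $s^{\al-2}E_{\al,\al-1}(-\la_n s^\al)$ for parts (b) and (c). The only divergence is at $\al=2$, where the paper simply invokes standard hyperbolic regularity theory to get $u\in\bigcap_{k=0}^3C^k([0,T];H^{3-k}(\Om))$ rather than carrying out your explicit integration by parts in $\tau$; both are valid.
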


To conclude the uniqueness for Problem \ref{prob-IMSP}, we need the following vanishing property of the homogeneous problem.

\begin{lem}\label{lem-UCP}
Let $\om\subset\Om$ be an arbitrary nonempty subdomain and $w$ satisfy
\[
\begin{cases}
(\pa_{0+}^\al-\tri)w=0 & \mbox{in }\Om\times(0,T),\\
\begin{cases}
w=a & \mbox{if }0<\al\le1,\\
w=a,\ \pa_t w=b & \mbox{if }1<\al<2
\end{cases} & \mbox{in }\Om\times\{0\},\\
w=0 & \mbox{on }\pa\Om\times(0,T),
\end{cases}
\]
where $a\in L^2(\Om)$ and $b\in\cD((-\tri)^{-\f1\al})$. Then $w=0$ in $\om\times(0,T)$ implies $a=b\equiv0$ in $\Om$.
\end{lem}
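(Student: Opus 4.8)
The plan is to reduce the statement, via time-analyticity and the Laplace transform, to a unique continuation property for the elliptic resolvent equation. First I would expand the solution of the homogeneous problem in Dirichlet eigenfunctions of $-\tri$, writing
\[
w(\,\cdot\,,t)=\sum_{n=1}^\infty\Bigl[(a,\vp_n)\,E_{\al,1}(-\la_n t^\al)+(b,\vp_n)\,t\,E_{\al,2}(-\la_n t^\al)\Bigr]\vp_n,
\]
where the second family of terms occurs only when $1<\al<2$ (and $(b,\vp_n)$ is understood in the duality sense since $b\in\cD((-\tri)^{-1/\al})$). Using the bound \eqref{eq-est-ML} together with $a\in L^2(\Om)$ and $b\in\cD((-\tri)^{-1/\al})$, this series converges in $L^2(\Om)$ for every $t>0$; moreover, since $\al<2$, by tracking $\arg(-\la_n t^\al)$ and invoking the sectorial asymptotics of the Mittag-Leffler functions, the series in fact defines a holomorphic $L^2(\Om)$-valued function on a nonempty sector $\{t\in\BC;\ t\ne0,\ |\arg t|<\pi(2-\al)/(2\al)\}$. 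Restricting to $\om$ and using $w=0$ in $\om\times(0,T)$, the identity theorem upgrades this to $w(\,\cdot\,,t)=0$ in $L^2(\om)$ for all $t>0$.

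Next I would apply the Laplace transform in $t$. As $w(\,\cdot\,,t)$ is bounded (indeed decaying) on $(0,\infty)$, the transform $\wh w(\,\cdot\,,s)=\int_0^\infty\e^{-st}w(\,\cdot\,,t)\,\rd t$ is well defined and holomorphic for $\rRe s>0$, vanishes on $\om$, and—computing term by term—equals
\[
\wh w(\bm x,s)=\sum_\mu\f{s^{\al-2}}{s^\al+\mu}\bigl(s\,(P_\mu a)(\bm x)+(P_\mu b)(\bm x)\bigr),
\]
where $\mu$ runs over the distinct eigenvalues of $-\tri$ with the homogeneous Dirichlet condition, $P_\mu$ denotes the corresponding spectral projection, and the $P_\mu b$ term is absent for $\al\le1$. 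Pairing with an arbitrary test function $\psi$ supported in $\om$ gives a scalar function of $s$ that is identically zero on $\rRe s>0$ and that can be continued meromorphically: for $\al\le1$ one substitutes $z=s^\al$, so that the continuation has simple poles exactly at $z=-\mu$, while for $1<\al<2$ one works directly in the $s$-variable, where the only singularities are the two conjugate simple poles $s_\mu^\pm=\mu^{1/\al}\e^{\pm\ri\pi/\al}$ (both in the left half-plane). Forcing every residue to vanish yields $(P_\mu a,\psi)=0$ when $\al\le1$, and $s_\mu^\pm(P_\mu a,\psi)+(P_\mu b,\psi)=0$ when $1<\al<2$; since $s_\mu^+\ne s_\mu^-$, in all cases $(P_\mu a,\psi)=(P_\mu b,\psi)=0$, hence $P_\mu a=P_\mu b=0$ in $\om$ for every $\mu$.

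To finish, I would note that for each fixed $\mu$, $P_\mu a$ and $P_\mu b$ are finite linear combinations of Dirichlet eigenfunctions of $-\tri$, hence real-analytic solutions of $(-\tri-\mu)v=0$ in $\Om$. Vanishing on the nonempty open set $\om$, together with connectedness of $\Om$, forces $P_\mu a\equiv P_\mu b\equiv0$ throughout $\Om$; summing over $\mu$ in the Parseval expansion gives $a=b\equiv0$ in $\Om$, which is the claim.

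I expect the main obstacle to be the time-analyticity step for the full range $\al\in(0,1]\cup(1,2)$: one must verify that the Mittag-Leffler series genuinely defines a holomorphic $L^2(\Om)$-valued function on a complex sector, which requires the sectorial (rather than merely real-axis) asymptotics of $E_{\al,\beta}$ and careful control of $\arg(-\la_n t^\al)$—and it is precisely here that the exclusion of $\al=2$ is essential, since there the Mittag-Leffler functions do not decay and no such analytic extension exists. A secondary technical point is justifying the term-by-term Laplace transform and the locally uniform convergence of the resulting series away from the poles, so that the residue computation is legitimate.
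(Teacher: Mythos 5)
Your proposal is correct and, for the main case $1<\al<2$, follows essentially the same route as the paper: eigenfunction expansion, analytic extension in $t$, term-by-term Laplace transform, extraction of the residues at the conjugate poles $z_\ell^\pm=\mu_\ell^{1/\al}\e^{\pm\ri\pi/\al}$ to get $z_\ell^\pm(P_{\mu_\ell}a,\psi)+(P_{\mu_\ell}b,\psi)=0$, and then unique continuation for the elliptic equation $(\tri+\mu_\ell)v=0$. The only difference is that the paper dispatches $\al=1$ to the classical parabolic unique continuation and $0<\al<1$ to a cited result of Jiang--Li--Liu--Yamamoto, whereas you run the same spectral/Laplace argument uniformly over $\al\in(0,2)$, which is a legitimate (and self-contained) variant of the same method.
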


For later use, we need the following lemma concerning the long-time asymptotic behavior of the solution to \eqref{eq-IBVP-u} with a source term compactly supported in time.

\begin{lem}\label{lem-asymp}
Let $0<\al<2$ and $u$ be the solution to
\begin{equation}\label{eq-IBVP-u-infty}
\begin{cases}
(\pa_{0+}^\al-\tri)u=F & \mbox{in }\Om\times(0,\infty),\\
u=\pa_t^{\lceil\al\rceil-1}u=0 & \mbox{in }\Om\times\{0\},\\
u=0 & \mbox{on }\pa\Om\times(0,\infty),
\end{cases}
\end{equation}
where $F\in C([0,\infty);C_0^\infty(\ov\Om))$ and there exists $T_0>0$ such that $\supp\,F\subset\ov\Om\times[0,T_0]$. Then
\[
\begin{cases}
J_{0+}^{1-\al}u(\,\cdot\,,T)\longrightarrow0, & 0<\al\le1\\
\pa_{0+}^{\al-1}u(\,\cdot\,,T)\longrightarrow0, & 1<\al<2
\end{cases}\quad\mbox{in }L^2(\Om)\mbox{ as }T\to\infty.
\]
\end{lem}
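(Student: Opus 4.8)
The plan is to diagonalize \eqref{eq-IBVP-u-infty} by the Dirichlet eigenfunctions $\{\vp_n\}$ of $-\tri$ and reduce the long-time analysis to decoupled scalar fractional ODEs, whose solutions are given explicitly by Mittag-Leffler kernels. Writing $u_n(t):=(u(\,\cdot\,,t),\vp_n)$ and $F_n(t):=(F(\,\cdot\,,t),\vp_n)$, the coefficient $u_n$ solves $\pa_{0+}^\al u_n+\la_n u_n=F_n$ on $(0,\infty)$ with $u_n(0)=0$ and, in the case $1<\al<2$, also $u_n'(0)=0$; hence by the Duhamel principle
\[
u_n(t)=\int_0^t(t-s)^{\al-1}E_{\al,\al}\bigl(-\la_n(t-s)^\al\bigr)F_n(s)\,\rd s,\quad t\ge0.
\]
Since $\supp F\subset\ov\Om\times[0,T_0]$, for $T>T_0$ this integral runs only over $s\in[0,T_0]$. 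Well-posedness of \eqref{eq-IBVP-u-infty} for a source that is smooth and compactly supported in time follows exactly as in Lemma \ref{lem-IBVP-u} (indeed more easily), so that $u(\,\cdot\,,t)=\sum_n u_n(t)\vp_n$ with enough regularity that, by Fubini's theorem, the purely temporal operators $J_{0+}^{1-\al}$ and $\pa_{0+}^{\al-1}=J_{0+}^{2-\al}\pa_t$ commute with $(\,\cdot\,,\vp_n)$, i.e.\! act termwise on the series.

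The key observation is that, after diagonalization, the two quantities in the statement admit a common closed form. Using the standard Mittag-Leffler identity $J_{0+}^\be\bigl[t^{\ga-1}E_{\al,\ga}(-\la t^\al)\bigr]=t^{\ga+\be-1}E_{\al,\ga+\be}(-\la t^\al)$ (see e.g.\! \cite{P99}) with $\ga=\al$, together with the associativity of the time convolution in the Duhamel formula, one gets for $0<\al\le1$ (with $\be=1-\al$; the case $\al=1$ being trivial since then $J_{0+}^{1-\al}$ is the identity)
\[
J_{0+}^{1-\al}u_n(t)=\int_0^tE_\al\bigl(-\la_n(t-s)^\al\bigr)F_n(s)\,\rd s,
\]
whereas for $1<\al<2$ one first obtains, with $\be=2-\al$, that $J_{0+}^{2-\al}u_n(t)=\int_0^t(t-s)E_{\al,2}(-\la_n(t-s)^\al)F_n(s)\,\rd s$, and differentiating in $t$ (the boundary contribution vanishes and $\tfrac{\rd}{\rd\tau}[\tau E_{\al,2}(-\la_n\tau^\al)]=E_\al(-\la_n\tau^\al)$) gives
\[
\pa_{0+}^{\al-1}u_n(t)=\int_0^tE_\al\bigl(-\la_n(t-s)^\al\bigr)F_n(s)\,\rd s.
\]
Write $v_n(t)$ for this common right-hand side. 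For $T>T_0$ we have $v_n(T)=\int_0^{T_0}E_\al(-\la_n(T-s)^\al)F_n(s)\,\rd s$, and \eqref{eq-est-ML} combined with $0<\la_1\le\la_n$ yields, for all $s\in[0,T_0]$,
\[
\bigl|E_\al\bigl(-\la_n(T-s)^\al\bigr)\bigr|\le\f C{1+\la_n(T-s)^\al}\le\f C{1+\la_1(T-T_0)^\al},
\]
so that by the Cauchy--Schwarz inequality in $s$, $|v_n(T)|\le C\,T_0^{1/2}\bigl(1+\la_1(T-T_0)^\al\bigr)^{-1}\|F_n\|_{L^2(0,T_0)}$ for every $n\ge1$ and $T>T_0$.

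Squaring and summing over $n$, and using $\sum_n\|F_n\|_{L^2(0,T_0)}^2=\int_0^{T_0}\|F(\,\cdot\,,s)\|_{L^2(\Om)}^2\,\rd s=:M<\infty$ (finite since $F\in C([0,\infty);C_0^\infty(\ov\Om))$), we obtain for $T>T_0$
\[
\sum_{n=1}^\infty|v_n(T)|^2\le\f{C^2T_0\,M}{\bigl(1+\la_1(T-T_0)^\al\bigr)^2}\longrightarrow0\quad\text{as }T\to\infty.
\]
Since the left-hand side equals $\|J_{0+}^{1-\al}u(\,\cdot\,,T)\|_{L^2(\Om)}^2$ when $0<\al\le1$ and $\|\pa_{0+}^{\al-1}u(\,\cdot\,,T)\|_{L^2(\Om)}^2$ when $1<\al<2$ (by the diagonalization above), the lemma follows. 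The decay is thus an immediate consequence of \eqref{eq-est-ML} and the strict positivity $\la_1>0$; the part that requires genuine care is the rigorous justification of the Duhamel representation and of the termwise action of $J_{0+}^{1-\al}$, respectively $\pa_{0+}^{\al-1}$, in the $L^2(\Om)$ sense on the unbounded time interval, for which regularity estimates of the type in Lemma \ref{lem-IBVP-u}, adapted to the present (smoother) source, are exactly what is needed.
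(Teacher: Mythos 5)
Your proposal is correct and follows essentially the same route as the paper: diagonalize by the Dirichlet eigenfunctions, show that both $J_{0+}^{1-\al}u_n(T)$ and $\pa_{0+}^{\al-1}u_n(T)$ reduce to the common expression $\int_0^{T_0}E_{\al,1}(-\la_n(T-t)^\al)F_n(t)\,\rd t$ (you via the semigroup identity for Mittag-Leffler kernels and associativity of convolution, the paper via termwise Beta-function integration --- the same computation), and then conclude from the estimate \eqref{eq-est-ML} and $\la_1>0$. Your final summation step is in fact slightly cleaner than the paper's: by using Cauchy--Schwarz in $t$ and Parseval's identity you only need $F\in C([0,T_0];L^2(\Om))$, whereas the paper bounds $\max_t|(F(\,\cdot\,,t),\vp_n)|$ and must insert a weight $\la_n^{-2\ga}$ with $\ga>d/4$ together with the Weyl asymptotics to make the series converge.
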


For the purpose of consistency, we postpone the proofs of the above three lemmas concerning the forward problems to Section \ref{sec-forward}. Below we state the main results in this paper.

\begin{thm}\label{thm-unique}
Let $0<\al\le2,$ $f,g\in\cD((-\tri)^{\f{\lceil\al\rceil}2})$ and $u$ be the solution to \eqref{eq-IBVP-u}--\eqref{eq-def-F}.

{\rm(a)} In the case of $0<\al\le1$, we further assume $\pa\om\supset\pa\Om$ if $\al\ne1$. Then \eqref{eq-u-0} implies $f\equiv0$ in $\Om$.

{\rm(b)} In the case of $1<\al\le2$, we assume $\pa\om\supset\pa\Om$ and, if $\al=2$ we additionally require
\begin{equation}\label{eq-asp-T}
T>2\inf_{\bm y\not\in\ov\Om}\sup_{\bm x\in\Om}|\bm x-\bm y|,\quad B_{c_0 T+\de_0}\subset\Om,
\end{equation}
where $c_0:=\max\{|\bm p|,|\bm q|\}$. Then \eqref{eq-u-0} implies $f=g\equiv0$ in $\Om$.
\end{thm}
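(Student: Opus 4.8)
The plan is to reduce Problem \ref{prob-IMSP} to the inverse initial value problem handled by Lemma \ref{lem-UCP}, by exploiting the special structure of the moving source \eqref{eq-def-F}. First I would observe that since $\bm p$ is a constant velocity, the translated function $v(\bm x,t):=u(\bm x+\bm p t,t)$ is a natural candidate for removing the motion, but because of the Laplacian this does not quite produce a closed equation; instead, the right device is to differentiate in $t$ and use a telescoping/Duhamel argument. Concretely, for $0<\al\le1$, set $w:=\pa_t u - (\bm p\cdot\nb)u$ if $\al=1$ (the classical moving-frame trick), and more generally for $\al\in(0,1)$ exploit that $F$ depends on $(\bm x,t)$ only through $\bm x-\bm p t$, so that $F$ satisfies the transport equation $\pa_t F + (\bm p\cdot\nb)F = 0$; hence applying $\pa_t + (\bm p\cdot\nb)$ (or an appropriate fractional analogue obtained via the representation of $u$ as a time-convolution of the source against the fundamental solution) kills the inhomogeneity and leaves a homogeneous fractional equation whose ``initial data'' is built from $f$. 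The assumption $\bm p\ne\bm q$ in the case $1<\al\le2$ is what allows one to separate the two sources: applying the annihilator for one orbit converts the two-source problem into a one-source problem, and a second application (or a rank argument on the pair of annihilators) isolates $f$ and $g$ separately.

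The second main step is to verify that the resulting homogeneous problem has initial data that vanishes iff $f$ (resp.\ $f,g$) vanishes, and that this data lies in the function classes required by Lemma \ref{lem-UCP}, namely $a\in L^2(\Om)$ and $b\in\cD((-\tri)^{-1/\al})$. This is where the regularity Lemma \ref{lem-IBVP-u} is used: parts (b) and (c) give exactly the $L^1_t$-in-time derivatives of $u$ with the right spatial smoothness so that the moving-frame combination $\pa_t u \pm (\bm p\cdot\nb)u$ makes sense and has a well-defined trace at $t=0$; the convergence statements $u(\cdot,t)\to 0$, $\pa_t u(\cdot,t)\to 0$ as $t\to0$ pin down the initial values of the transformed function in terms of $\tri f$, $f$, etc. The condition $\pa\om\supset\pa\Om$ for $\al\ne1$ is presumably needed because the moving-frame operator $(\bm p\cdot\nb)$ does not preserve the Dirichlet boundary condition, so one must observe $u$ near the whole boundary to control boundary terms; I would track these boundary contributions carefully and show they vanish given \eqref{eq-u-0} together with the boundary condition on $u$.

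For the pure wave case $\al=2$, fractional calculus is unavailable and one instead uses the classical moving-frame substitution directly: $w(\bm x,t):=\pa_t u(\bm x,t)+(\bm p\cdot\nb)u(\bm x,t)$ (for the first source) solves a wave equation with source $\pa_t F + (\bm p\cdot\nb)F$, which for $F=f(\bm x-\bm p t)$ vanishes identically, and one handles the second source $g(\bm x-\bm q t)$ by a further application of $\pa_t+(\bm q\cdot\nb)$; because these two first-order operators commute and $\bm p\ne\bm q$, applying both in succession to $u$ produces a homogeneous wave equation. The extra hypotheses \eqref{eq-asp-T} are exactly the finite-speed-of-propagation / observability conditions: $B_{c_0T+\de_0}\subset\Om$ guarantees the moving supports never touch $\pa\Om$ on $(0,T)$, and the geometric-optics time bound $T>2\inf_{\bm y\notin\ov\Om}\sup_{\bm x\in\Om}|\bm x-\bm y|$ ensures that the observation time is long enough for an exact-controllability / Holmgren-type uniqueness argument (or the unique continuation Lemma \ref{lem-UCP} in its $\al\to2$ incarnation) to force the Cauchy data, and hence $f,g$, to vanish.

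The step I expect to be the main obstacle is making the ``fractional moving-frame'' annihilator rigorous for $\al\in(0,1)\cup(1,2)$: unlike the $\al=1,2$ cases, the operator $\pa_t+(\bm p\cdot\nb)$ does not commute with $\pa_{0+}^\al$, so one cannot simply apply it to the equation. I anticipate the correct route is to work at the level of the Duhamel representation $u(\cdot,t)=\int_0^t s^{\al-1}E_{\al,\al}(-s^\al(-\tri))\,F(\cdot,t-s)\,\rd s$ (with $(-\tri)$ acting via the eigenexpansion), change variables to the moving frame inside the integral, differentiate in $t$, and recognize the boundary term of the $s$-integral — evaluated using $\pa_t F+(\bm p\cdot\nb)F\equiv0$ — as producing precisely a homogeneous fractional evolution with initial value a multiple of $f$ (or $\tri f$). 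Controlling the interchange of $\nb$, $\pa_t$ and the convolution, and verifying the limiting behavior as $t\to0$ so that Lemma \ref{lem-UCP} applies with the stated data regularity, is the technical heart; everything after that is a direct appeal to the vanishing property.
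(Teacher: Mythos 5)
Your proposal is correct and follows essentially the same route as the paper: annihilate the source with $(\pa_t+\bm p\cdot\nb)$ (and $(\pa_t+\bm q\cdot\nb)$ for two sources), reduce to a homogeneous problem with initial data $f$ (resp.\ $f+g$ and $\bm q\cdot\nb f+\bm p\cdot\nb g$), invoke the vanishing property of Lemma \ref{lem-UCP} for $0<\al<2$ and observability for $\al=2$, with $\pa\om\supset\pa\Om$ serving exactly to restore the homogeneous Dirichlet condition. The only difference is in the technical step you flagged: rather than working through the Duhamel representation, the paper resolves the non-commutation of $\pa_t+\bm p\cdot\nb$ with $\pa_{0+}^\al$ by switching between Caputo and Riemann--Liouville derivatives (Lemma \ref{lem-C-RL}), obtaining $(D_{0+}^\al-\tri)v_1=0$ and then applying $J_{0+}^{\lceil\al\rceil-\al}$ to return to a Caputo equation.
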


\begin{rem}
\begin{itemize}
\item[{\rm(a)}]By examining the proofs of the above theorem in Section \ref{sec-proof}, it turns out that we can consider more general formulations than that in \eqref{eq-IBVP-u}. For instance, instead of $-\tri$ in the governing equation, our argument works for the elliptic operator $-\sum_{j,k=1}^d a_{j k}\pa_j\pa_k+c$, where $(a_{j k})_{1\le j,k\le d}$ is a constant, symmetric and strictly positive definite matrix, and $c\ge0$ is a constant. However, in this paper we choose to treat the simplest model equation in order to focus on the main topic.
\item[{\rm(b)}] In the case $\al=2$,  the two conditions in \eqref{eq-asp-T} can be both satisfied if the sources do not move too fast in comparison with the wave velocity, or equivalently, the maximum velocity $c_0$ is sufficiently small if the wave speed of the background medium has been normalized to be one. Moreover, for $\al=2$, we can prove not only the uniqueness $f=g=0$ but also the stability in estimating $f$ and $g$ by data, but we omit the details. The additional conditions in \eqref{eq-asp-T} for hyperbolic equations are not needed when $0<\al<2$. See e.g. \cite{IY2001} for related inverse problems with $\al=2$.
\end{itemize}
\end{rem}
The determination of a moving source profile function from boundary Cauchy data is stated below. The proof is motivated by recent inverse source problems for acoustic, elastic and electromagnetic wave equations considered in \cite{HKLZ2019, HKZ20, HuKian2020}.

\begin{thm}\label{TH:4}
Let $T=\infty$ and suppose that the temporal function $h$ and the orbit function $\bm\rho$ in \eqref{eq:15} are both given. We assume that
\[
\int_0^\infty h(t)\,\rd t\ne0,\quad\supp\,h\in[0,T_0],\quad f\in C_0^\infty(\Om),\quad\supp\,f\in B_{\de_0}
\]
with some $T_0>0$ and $\de_0>0$.

{\rm(a)} Let $0<\al<2$. We assume that $u$ satisfies \eqref{eq-IBVP-u} and \eqref{eq:15}. Then the source profile $f$ is uniquely determined by Cauchy data $(u,\pa_{\bm\nu}u)$ on $\pa\Om\times(0,\infty)$.

{\rm(b)} Let $\al=2$. We assume that $u$ satisfies {\rm\eqref{eq:15}, \eqref{eq-asp-T}} and the initial value problem
\begin{equation}
\begin{cases}
(\pa_t^2-\tri)u(\bm x,t)=f(\bm x-\bm\rho(t))h(t), & (\bm x,t)\in\Om\times(0,\infty),\\
u(\bm x,0)=\pa_t u(\bm x,0)=0, & \bm x\in\Om.
\end{cases}
\end{equation} 
Then $f$ is uniquely determined by Cauchy data $(u,\pa_{\bm\nu}u)$ on $\pa\Om\times(0,\infty)$.
\end{thm}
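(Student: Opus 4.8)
The plan is to realize the Laplace-transform-plus-test-function strategy announced in the introduction, following \cite{HKLZ2019,HKZ20,HuKian2020}. By linearity of \eqref{eq-IBVP-u} it suffices to prove that $\pa_{\bm\nu}u=0$ on $\pa\Om\times(0,\infty)$ (recall that $u=0$ there is built into the model) forces $f\equiv0$. First I would take the Laplace transform in time, $\wh u(\bm x,s):=\int_0^\infty\e^{-st}u(\bm x,t)\,\rd t$ for $s>0$. The well-posedness and regularity of the forward problem --- Lemma \ref{lem-IBVP-u} for $0<\al<2$, and for $\al=2$ the fact that $u$ is a free, hence $L^2(\Om)$-bounded, wave once $t>T_0$ --- guarantee that $\wh u$ is well defined and, thanks to the zero initial conditions, that $\widehat{\pa_{0+}^\al u}=s^\al\wh u$. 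Hence $\wh u(\,\cdot\,,s)$ solves
\[
(s^\al-\tri)\wh u=\wh F(\,\cdot\,,s)\ \mbox{ in }\Om,\qquad\wh u=\pa_{\bm\nu}\wh u=0\ \mbox{ on }\pa\Om,
\]
where $\wh F(\bm x,s)=\int_0^{T_0}\e^{-st}h(t)\,f(\bm x-\bm\rho(t))\,\rd t$ has compact support in $\Om$.

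Using the homogeneous Cauchy data I would then extend $\wh u(\,\cdot\,,s)$ by zero outside $\Om$. Since $\wh u\in H^2(\Om)\cap H_0^1(\Om)$ by elliptic regularity and $\pa_{\bm\nu}\wh u=0$ on $\pa\Om$, the extension $\wt u$ lies in $H^2(\BR^d)$, is supported in $\ov\Om$, and satisfies $(s^\al-\tri)\wt u=\wh F$ in $\BR^d$ without any interface contribution on $\pa\Om$. For every $\bm\zeta\in\BC^d$ with $\bm\zeta\cdot\bm\zeta=s^\al$, the plane wave $\e^{\bm\zeta\cdot\bm x}$ is annihilated by $s^\al-\tri$, so pairing it with $\wt u$, integrating by parts (no boundary terms, $\wt u$ being compactly supported), and substituting $\bm x=\bm y+\bm\rho(t)$ yield, with $\Phi(s,\bm\zeta):=\int_0^{T_0}\e^{-st}h(t)\,\e^{\bm\zeta\cdot\bm\rho(t)}\,\rd t$ and $G(\bm\zeta):=\int_{\BR^d}f(\bm y)\,\e^{\bm\zeta\cdot\bm y}\,\rd\bm y$,
\[
0=\int_{\BR^d}\wh F(\bm x,s)\,\e^{\bm\zeta\cdot\bm x}\,\rd\bm x=\Phi(s,\bm\zeta)\,G(\bm\zeta).
\]
Here $G$ is entire on $\BC^d$ by the Paley--Wiener theorem, $f$ being compactly supported.

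The decisive step is to choose the parameters so that $\Phi\ne0$. I would take $\bm\zeta=\bm\eta\in\BR^d$ real, so that the constraint becomes $|\bm\eta|^2=s^\al$, compatible with $s$ and $|\bm\eta|$ both small. Since $\Phi$ is continuous and $\Phi(s,\bm0)\to\int_0^\infty h(t)\,\rd t\ne0$ as $s\to0^+$, there exist $s_0,r_0>0$ with $\Phi(s,\bm\eta)\ne0$ whenever $0<s<s_0$ and $|\bm\eta|<r_0$. Fixing $0<s<\min\{s_0,r_0^{2/\al}\}$ and $\bm\eta=s^{\al/2}\bm e$ with $\bm e\in\BR^d$, $|\bm e|=1$ arbitrary, then forces $G(\bm\eta)=0$; as $s$ and $\bm e$ vary, $G$ vanishes on a nonempty open set $\{0<|\bm\eta|<\varrho\}\subset\BR^d$. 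Being entire, $G$ then vanishes on all of $\BC^d$; in particular $\int_{\BR^d}f(\bm y)\,\e^{-\ri\bm\xi\cdot\bm y}\,\rd\bm y=G(-\ri\bm\xi)=0$ for every $\bm\xi\in\BR^d$, whence $f\equiv0$. The case $\al=2$ is handled in exactly the same way, the geometric hypothesis \eqref{eq-asp-T} entering only through the well-posedness and finite-speed-of-propagation properties of the wave equation.

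The main obstacle is the analytic groundwork underpinning the first two paragraphs: (i) verifying that $u$ grows at most subexponentially in $t$, so that $\wh u$ exists for $s>0$ and $\widehat{\pa_{0+}^\al u}=s^\al\wh u$ holds --- $\al=2$ being the most delicate case --- and (ii) checking that the zero-extension $\wt u$ is a genuine $H^2(\BR^d)$ solution carrying no distributional mass on $\pa\Om$; both reduce to the forward regularity theory together with the vanishing Cauchy data. Once the factorization $\Phi\,G=0$ is in hand, the remainder is elementary, the only real idea being that $\bm\zeta$ may be chosen real and small by coupling $|\bm\zeta|^2=s^\al$ with $s\downarrow0$.
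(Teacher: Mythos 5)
Your argument is correct in outline and arrives at the same key factorization as the paper, but by a genuinely different route. The paper stays in the time domain: it pairs the equation with the test function $v_T(\bm x,t;\bm\xi)=\e^{-\ri\bm\xi\cdot\bm x}(T-t)^{\al-1}E_{\al,\al}(-|\bm\xi|^2(T-t)^\al)$, uses the forward/backward fractional integration-by-parts formulae of Lemma \ref{lem-frac-int} to get $(2\pi)^{d/2}\wt f(\bm\xi)I_T(\bm\xi)=\int_\Om u(\bm x,T)\,\e^{-\ri\bm\xi\cdot\bm x}\rd\bm x$, and then needs the dedicated decay result Lemma \ref{lem-asymp} to show that $J_{0+}^{1-\al}$ (resp.\ $\pa_{0+}^{\al-1}$) of the right-hand side vanishes as $T\to\infty$ while $J_{0+}^{1-\al}I_T(\bm0)\to\int_0^\infty h\,\rd t\ne0$; for $\al=2$ it switches to the plane wave $\e^{-\ri|\bm\xi|t}\e^{-\ri\bm\xi\cdot\bm x}$. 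You instead Laplace-transform first, so the residual term disappears entirely: the overdetermined elliptic problem $(s^\al-\tri)\wh u=\wh F$ with $\wh u=\pa_{\bm\nu}\wh u=0$ on $\pa\Om$, paired with $\e^{\bm\zeta\cdot\bm x}$, $\bm\zeta\cdot\bm\zeta=s^\al$, gives $\Phi(s,\bm\zeta)G(\bm\zeta)=0$ with no remainder, and your coupling $|\bm\zeta|^2=s^\al$ with $s\downarrow0$ plays exactly the role of the paper's double limit $T\to\infty$, $\bm\xi\to\bm0$. What your version buys is uniformity in $\al\in(0,2]$ (no Mittag-Leffler computations and no separate test function for $\al=2$) and no need for Lemma \ref{lem-asymp}; what it costs is exactly the groundwork you flag: one must justify that $u$, $\pa_t u$ (and $\pa_t^2u$ for $\al>1$) are Laplace transformable on $(0,\infty)$ and that $\widehat{\pa_{0+}^\al u}=s^\al\wh u$ under the zero initial conditions, which requires the compact temporal support of $h$ together with the Mittag-Leffler decay for $\al<2$, or the zero-extended free-space representation of $u$ for $\al=2$ --- essentially the same analytic content as Lemma \ref{lem-asymp}, repackaged. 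Two minor points to tighten: the zero-extension $\wt u\in H^2(\BR^d)$ is unnecessary, since Green's identity on $\Om$ with the vanishing Cauchy data already kills all boundary terms; and for $\al=2$ the boundedness of $u$ for $t>T_0$ should be argued from the free-space extension (as the paper does), not from energy conservation in $\Om$, because part (b) imposes no boundary condition on $\pa\Om$.
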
\bigskip

In Theorems \ref{thm-unique} and \ref{TH:4}, the conclusions for the cases $\al\in(0,1)\cup(1,2)$ and $\al=1,2$ require different assumptions and formulations. This follows from that some properties of solutions in the cases $\al=1$ and $\al=2$ essentially differ from non-integer $\al$. 


\section{Proofs of Lemmas \ref{lem-IBVP-u}--\ref{lem-asymp}}\label{sec-forward}

\begin{proof}[Proof of Lemma $\ref{lem-IBVP-u}$]
First, by the regularity assumption on $f,g$ and the definition \eqref{eq-def-F} of $F$, it follows from the continuity of translation that $F\in\bigcap_{k=0}^{\lceil\al\rceil}C^k([0,T];\cD((-\tri)^{\f{\lceil\al\rceil-k}2}))$. Due to the essential difference in the solution properties, we divide the proofs into the cases of $0<\al<2$ and $\al=2$ separately.\medskip

{\bf Case 1}\ \ For $0<\al<2$, we fix $\ve\in(0,1]$ arbitrarily. In principle, the argument follows the same line as that in Sakamoto and Yamamoto \cite{SY11a} and Li, Liu and Yamamoto \cite{LLY15} especially in the case of $0<\al<1$. For the sake of self-containedness, we still give a proof here.

According to \cite{SY11a}, we can formally write the solution to \eqref{eq-IBVP-u}--\eqref{eq-def-F} as
\begin{equation}\label{eq-rep-u}
u(\,\cdot\,,t)=\int_0^t U(\tau)F(\,\cdot\,,t-\tau)\,\rd\tau,\quad U(t)h:=t^{\al-1}\sum_{n=1}^\infty E_{\al,\al}(-\la_n t^\al)(h,\vp_n)\vp_n.
\end{equation}
For $h\in\cD((-\tri)^\be)$ with some $\be\ge0$ and $0\le\ga<1$, by \eqref{eq-est-ML} we can estimate
\begin{align}
\|U(t)h\|_{\cD((-\tri)^{\be+\ga})}^2 & =t^{2(\al-1)}\sum_{n=1}^\infty|\la_n^\ga E_{\al,\al}(-\la_n t^\al)|^2|\la_n^\be(h,\vp_n)|^2\nonumber\\
& \le(C\,t^{\al-1})^2\sum_{n=1}^\infty\left(\f{(\la_n t^\al)^\ga}{1+\la_n t^\al}t^{-\al\ga}\right)^2|\la_n^\be(h,\vp_n)|^2\nonumber\\
& \le\left(C\|h\|_{\cD((-\tri)^\be)}t^{\al(1-\ga)-1}\right)^2,\quad t>0.\label{eq-est-U}
\end{align}

(a) Taking $\be=\f{\lceil\al\rceil}2$ and $\ga=1-\ve$ in \eqref{eq-est-U}, we employ Minkowski's inequality for integrals to estimate
\begin{align*}
\|u(\,\cdot\,,t)\|_{\cD((-\tri)^{\lceil\al\rceil/2+1-\ve})} & =\left\|\int_0^t U(\tau)F(\,\cdot\,,t-\tau)\,\rd\tau\right\|_{\cD((-\tri)^{\lceil\al\rceil/2+1-\ve})}\\
& \le\int_0^t\|U(\tau)F(\,\cdot\,,t-\tau)\|_{\cD((-\tri)^{\lceil\al\rceil/2+1-\ve})}\,\rd\tau\\
& \le C\int_0^t\|F(\,\cdot\,,t-\tau)\|_{\cD((-\tri)^{\lceil\al\rceil/2})}\tau^{\al\ve-1}\,\rd\tau\\
& \le\f C\ve\|f\|_{\cD((-\tri)^{\lceil\al\rceil/2})}t^{\al\ve},
\end{align*}
which implies (a) immediately.

(b) For $0<\al\le1$, we formally take time derivative in \eqref{eq-rep-u} to deduce
\[
\pa_t u(\,\cdot\,,t)=U(t)F(\,\cdot\,,0)+\int_0^t U(\tau)\pa_t F(\,\cdot\,,t-\tau)\,\rd\tau.
\]
Then taking $\be=0$ and $\ga=1-\ve$ in \eqref{eq-est-U} yields
\begin{align*}
\|\pa_t u(\,\cdot\,,t)\|_{\cD((-\tri)^{1-\ve})} & \le\|U(t)F(\,\cdot\,,0)\|_{\cD((-\tri)^{1-\ve})}+\int_0^t\|U(\tau)\pa_t F(\,\cdot\,,t-\tau)\|_{\cD((-\tri)^{1-\ve})}\,\rd\tau\\
& \le\|F(\,\cdot\,,0)\|_{L^2(\Om)}t^{\al\ve-1}+C\int_0^t\|\pa_t F(\,\cdot\,,t-\tau)\|_{L^2(\Om)}\tau^{\al\ve-1}\,\rd\tau\\
& \le C\|f\|_{L^2(\Om)}t^{\al\ve-1}+\f C\ve\|f\|_{\cD((-\tri)^{1/2})}t^{\al\ve},
\end{align*}
which implies $\pa_t u\in L^1(0,T;\cD((-\tri)^{1-\ve}))$.

(c) For $1<\al<2$, we utilize an alternative expression $u(\,\cdot\,,t)=\int_0^t U(t-\tau)F(\,\cdot\,,\tau)\,\rd\tau$ of \eqref{eq-rep-u}. By $\f\rd{\rd t}(t^{\al-1}E_{\al,\al}(-\la_n t^\al))=t^{\al-2}E_{\al,\al-1}(-\la_n t^\al)$, we formally differentiate the above equality to write
\begin{equation}\label{eq-rep-ut}
\pa_t u(\,\cdot\,,t)=\lim_{\tau\to0}U(\tau)F(\,\cdot\,,t)+\int_0^t V(\tau)F(\,\cdot\,,t-\tau)\,\rd\tau,
\end{equation}
where
\[
V(t)h:=t^{\al-2}\sum_{n=1}^\infty E_{\al,\al-1}(-\la_n t^\al)(h,\vp_n)\vp_n.
\]
For $h\in\cD((-\tri)^\be)$ with some $\be\ge0$, a similar argument as that for \eqref{eq-est-U} yields
\begin{align}
\|V(t)h\|_{\cD((-\tri)^{\be+1-1/\al-\ve})}^2 & =t^{2(\al-2)}\sum_{n=1}^\infty\left|\la_n^{1-\f1\al-\ve}E_{\al,\al-1}(-\la_n t^\al)\right|^2|\la_n^\be(h,\vp_n)|^2\nonumber\\
& \le(C\,t^{\al-2})^2\sum_{n=1}^\infty\left(\f{(\la_n t^\al)^{1-\f1\al-\ve}}{1+\la_n t^\al}t^{-\al(1-\f1\al-\ve)}\right)^2|\la_n^\be(h,\vp_n)|^2\nonumber\\
& \le\left(C\|h\|_{\cD((-\tri)^\be)}t^{\al\ve-1}\right)^2.\label{eq-est-V}
\end{align}
Since $F\in C([0,T];\cD(-\tri))$, we take $\be=1$, $\ga=1-\f1\al-\ve$ in \eqref{eq-est-U} and $\be=1$ in \eqref{eq-est-V} to estimate
\begin{align*}
& \quad\,\|\pa_t u(\,\cdot\,,t)\|_{\cD((-\tri)^{2-1/\al-\ve})}\\
& \le\lim_{\tau\to0}\|U(\tau)F(\,\cdot\,,t)\|_{\cD((-\tri)^{2-1/\al-\ve})}+\int_0^t\|V(\tau)F(\,\cdot\,,t-\tau)\|_{\cD((-\tri)^{2-1/\al-\ve})}\,\rd\tau\\
& \le C\|F(\,\cdot\,,t)\|_{\cD(-\tri)}\lim_{\tau\to0}\tau^{\al\ve}+C\int_0^t\|F(\,\cdot\,,t-\tau)\|_{\cD(-\tri)}\tau^{\al\ve-1}\,\rd\tau\\
& \le\f C\ve\|F\|_{C([0,T];\cD(-\tri))},
\end{align*}
which indicates $\pa_t u\in L^\infty(0,T;\cD((-\tri)^{2-\f1\al-\ve}))$. Finally, within $\cD((-\tri)^{2-\f1\al-\ve})$, we further differentiate \eqref{eq-rep-ut} to deduce
\[
\pa_t^2u(\,\cdot\,,t)=V(t)F(\,\cdot\,,0)+\int_0^t V(\tau)\pa_t F(\,\cdot\,,t-\tau)\,\rd\tau.
\]
Now taking $\be=\f12$ in \eqref{eq-est-V}, we have
\begin{align*}
\|\pa_t^2u(\,\cdot\,,t)\|_{\cD((-\tri)^{3/2-1/\al-\ve})} & \le\|V(t)F(\,\cdot\,,0)\|_{\cD((-\tri)^{3/2-1/\al-\ve})}\\
& \quad\,+\int_0^t\|V(\tau)\pa_t F(\,\cdot\,,t-\tau)\|_{\cD((-\tri)^{3/2-1/\al-\ve})}\,\rd\tau\\
& \le C\|F(\,\cdot\,,0)\|_{\cD((-\tri)^{1/2})}t^{\al\ve-1}+\f C\ve\|\pa_t F\|_{C([0,T];\cD((-\tri)^{1/2}))}t^{\al\ve},
\end{align*}
which completes the proof of (c).\medskip

{\bf Case 2}\ \ For $\al=2$, one can take advantage of the standard theory on hyperbolic equations e.g.\! in \cite{LM72,I06} to conclude $u\in\bigcap_{k=0}^3C^k([0,T];H^{3-k}(\Om))$, which implies the desired results automatically.
\end{proof}

\begin{proof}[Proof of Lemma $\ref{lem-UCP}$]
For $\al=1$, Lemma \ref{lem-UCP} follows from the well-known unique continuation property for parabolic equations (see e.g.\! \cite{SS87}). In the case of $0<\al<1$, Lemma \ref{lem-UCP} reduces to a direct corollary of \cite[Theorem 2.5]{JLLY17}. Hence, in the sequel it suffices to deal with the case of $1<\al<2$.

By Sakamoto and Yamamoto \cite[Theorem 2.3]{SY11a}, we know
\[
w\in C([0,T];L^2(\Om))\cap C((0,T];H^2(\Om)\cap H_0^1(\Om)),
\]
which can be represented as
\[
w(\,\cdot\,,t)=\sum_{n=1}^\infty\{(a,\vp_n)E_{\al,1}(-\la_n t^\al)+(b,\vp_n)\,t\,E_{\al,2}(-\la_n t^\al)\}\vp_n.
\]
Moreover, $w:(0,T]\longrightarrow L^2(\Om)$ can be analytically extended to $(0,\infty)$. Without fear of confusion, we still denote this extension by $w$. Especially, the condition $w=0$ in $\om\times(0,T)$ is also extended to $w=0$ in $\om\times(0,\infty)$. Hence, there holds for any test function $\chi\in C_0^\infty(\om)$ and $t>0$ that
\begin{equation}\label{eq-rep-w}
0=\int_\om w(\,\cdot\,,t)\,\chi\,\rd\bm x=\sum_{n=1}^\infty\{(a,\vp_n)E_{\al,1}(-\la_n t^\al)+(b,\vp_n)\,t\,E_{\al,2}(-\la_n t^\al)\}(\chi,\vp_n),
\end{equation}
where $\chi$ in $(\chi,\vp_n)$ is understood as its zero extension to $\Om$.

Similarly to the proof of \cite[Theorem 4.4]{SY11a}, we attempt to take the Laplace transform of $w$ with respect to $t$. By the estimate (see \cite[Theorem 2.3]{SY11a})
\[
\|w(\,\cdot\,,t)\|_{L^2(\Om)}\le C\left(\|a\|_{L^2(\Om)}+\|b\|_{\cD((-\tri)^{-1/\al})}\right),\quad\forall\,t>0,
\]
we see that for any fixed $z\in\BC$ satisfying $\rRe\,z>0$, the function $\e^{-z t}w(\,\cdot\,,t)$ is integrable with respect to $t\in(0,\infty)$ in $L^2(\Om)$. Employing \eqref{eq-rep-w} and the formula (see Podlubny \cite[\S1.2.2]{P99})
\[
\int_0^\infty\e^{-z t}t^{m-1}E_{\al,m}(-\la_n t^\al)\,\rd t=\f{z^{\al-m}}{z^\al+\la_n},\quad\rRe\,z>\la_1^{1/\al},\ m=1,2,\quad n\in\BN,
\]
we obtain
\begin{equation}\label{eq-Laplace-0}
\sum_{n=1}^\infty\f{(a,\vp_n)z+(b,\vp_n)}{z^\al+\la_n}(\chi,\vp_n)=0,\quad\rRe\,z>\la_1^{1/\al},\ \forall\,\chi\in C_0^\infty(\om).
\end{equation}
Since $z^\al=\exp(\al\log z)$ is not well-defined on the negative real axis, we should cut off this branch and consider $U:=\{z\in\BC;\ -\pi<\arg\,z<\pi\}$. In $U$, we know that the algebraic equation $z^\al+\la_n=0$ with $\la_n>0$ has two distinct roots $z_n^\pm:=\la_n^{1/\al}\exp(\pm\ri\,\f\pi\al)$. Since $a\in L^2(\Om)$ and $b\in\cD((-\tri)^{-\f1\al})$, we can analytically continue both sides of \eqref{eq-Laplace-0} in $z$, so that \eqref{eq-Laplace-0} holds true for $z\in U\setminus\{z_n^\pm\}_{n=1}^\infty$.

To proceed, we shall take into consideration the multiplicity of the eigenvalues of $-\tri$ and rearrange its eigensystem $\{(\la_n,\vp_n)\}$ as follows. By $\{\mu_\ell\}_{\ell=1}^\infty$ we denote the distinct eigenvalues of $-\tri$, and by $\{\psi_{\ell,j}\}_{j=1}^{m_\ell}$ we denote the orthonormal basis of $\ker(\tri+\mu_\ell)$ which coincides with the one in the original eigenfunctions. Then \eqref{eq-Laplace-0} can be rewritten as
\begin{equation}\label{eq-Laplace-1}
\sum_{\ell=1}^\infty\f{(w_\ell(z),\chi)}{z^\al+\mu_\ell}=0,\quad z\in U\setminus\{z_\ell^\pm\}_{\ell=1}^\infty,\ \forall\,\chi\in C_0^\infty(\om),
\end{equation}
where
\[
w_\ell(z):=\sum_{j=1}^{m_\ell}\{(a,\psi_{\ell,j})z+(b,\psi_{\ell,j})\}\psi_{\ell,j}.
\]
Then for any fixed $\ell=1,2,\ldots$ and sufficiently small $\ep>0$, in the $\ep$ neighborhood $B_\ep(z_\ell^\pm)$ of $z_\ell^\pm$ we have
\[
\f{(w_\ell(z),\chi)}{z^\al+\mu_\ell}=-\sum_{k\in\BN\setminus\{\ell\}}\f{(w_k(z),\chi)}{z^\al+\mu_k},\quad z\in B_\ep(z_\ell^\pm)\setminus\{z_\ell^\pm\},\ \forall\,\chi\in C_0^\infty(\om).
\]
Obviously, since
$\sum_{k\in\BN\setminus\{\ell\}}\f{(w_k(z),\chi)}{z^\al+\mu_k}$ is continuous in $z$ at $z_{\ell}^{\pm}$, the right hand side of the above identity is bounded. Multiplying both sides of this identity by $z^\al+\mu_\ell$ and passing $z\to z_\ell^\pm$, we obtain
\[
(w_\ell(z_\ell^\pm),\chi)=\lim_{z\to z_\ell^\pm}(w_\ell(z),\chi)=-\lim_{z\to z_\ell^\pm}(z^\al+\mu_\ell)\sum_{k\in\BN\setminus\{\ell\}}\f{(w_k(z),\chi)}{z^\al+\mu_k}=0,\quad\forall\,\chi\in C_0^\infty(\om).
\]
Then, since $\chi \in C^{\infty}_0(\omega)$ is arbitrary, it follows from the variational principle that
\[
w_\ell(z_\ell^\pm)=\sum_{j=1}^{m_\ell}\left\{(a,\psi_{\ell,j})z_\ell^\pm+(b,\psi_{\ell,j})\right\}\psi_{\ell,j}=0\quad\mbox{in }\om,\ \ell=1,2,\ldots.
\]
Meanwhile, since $w_\ell(z_\ell^\pm)$ satisfy the elliptic equation $(\tri+\mu_\ell)w_\ell(z_\ell^\pm)=0$ in $\Om$, the unique continuation for elliptic equations (e.g., Isakov \cite{I06}) implies $w_\ell(z_\ell^\pm)\equiv0$ in $\Om$ for each $\ell=1,2,\ldots$. By the linear independency of $\{\psi_{\ell,j}\}_{j=1}^{m_\ell}$, we see that
\[
(a,\psi_{\ell,j})z_\ell^\pm+(b,\psi_{\ell,j})=0,\quad1\le j\le m_\ell,\ \ell=1,2,\ldots.
\]
Since $z_\ell^\pm\not\in\BR$ are complex conjugate of each other, we finally obtain
\[
(a,\psi_{\ell,j})=(b,\psi_{\ell,j})=0,\quad1\le j\le m_\ell,\ \ell=1,2,\ldots
\]
and hence $a=b\equiv0$ in $\Om$ due to the completeness of the Dirichlet eigenfunctions.
\end{proof}

\begin{rem}
The proof of Lemma \ref{lem-UCP} for $\al\in(1,2)$ relies heavily on the analyticity of the solution in the time variable, which applies to the scalar wave equation ($\al=2$) when the dynamical measurement data over $(0,\infty)$ are available; see \cite[Theorem 2.1 and Corollary 2.3]{HKZ20} where the data are measured on a closed surface. In Subsection \ref{sec:4.2} below, we shall present a proof for the wave equation using the data over a finite time period $(0,T)$.
\end{rem}

\begin{proof}[Proof of Lemma $\ref{lem-asymp}$]
According to \eqref{eq-rep-u}, we represent the solution to \eqref{eq-IBVP-u-infty} as
\[
u(\,\cdot\,,t)=\sum_{n=1}^\infty u_n(t)\vp_n,\quad u_n(t):=\int_0^t(t-\tau)^{\al-1}E_{\al,\al}(-\la_n(t-\tau)^\al)(F(\,\cdot\,,\tau),\vp_n)\,\rd\tau.
\]

For $0<\al\le1$, we calculate
\begin{align*}
& \quad\,J_{0+}^{1-\al}u_n(T)\\
&  =\f1{\Ga(1-\al)}\int_0^T\f1{(T-t)^\al}\int_0^t(t-\tau)^{\al-1}E_{\al,\al}(-\la_n(t-\tau)^\al)(F(\,\cdot\,,\tau),\vp_n)\,\rd\tau\rd t\\
& =\int_0^T(F(\,\cdot\,,\tau),\vp_n)\left\{\f1{\Ga(1-\al)}\int_\tau^T(T-t)^{-\al}(t-\tau)^{\al-1}E_{\al,\al}(-\la_n(t-\tau)^\al)\,\rd t\right\}\rd\tau,
\end{align*}
where
\begin{align*}
& \quad\,\f1{\Ga(1-\al)}\int_\tau^T(T-t)^{-\al}(t-\tau)^{\al-1}E_{\al,\al}(-\la_n(t-\tau)^\al)\,\rd t\\
& =\f1{\Ga(1-\al)}\sum_{k=0}^\infty\f{(-\la_n)^k}{\Ga(\al(k+1))}\int_\tau^T(T-t)^{-\al}(t-\tau)^{\al(k+1)-1}\rd t\\
& =\f1{\Ga(1-\al)}\sum_{k=0}^\infty\f{(-\la_n)^k}{\Ga(\al(k+1))}(T-\tau)^{\al k}\f{\Ga(1-\al)\Ga(\al(k+1))}{\Ga(\al k+1)}\\
& =\sum_{k=0}^\infty\f{(-\la_n(T-\tau)^\al)^k}{\Ga(\al k+1)}=E_{\al,1}(-\la_n(T-\tau)^\al).
\end{align*}
This implies
\begin{equation}\label{eq-Jwn}
J_{0+}^{1-\al}u_n(T)=\int_0^T E_{\al,1}(-\la_n(T-t)^\al)(F(\,\cdot\,,t),\vp_n)\,\rd t.
\end{equation}

Now we turn to the case of $1<\al<2$. By
$\pa_{0+}^{\al-1}u_n=J_{0+}^{2-\al}(u_n')$, we first calculate $u_n'(t)$. Using
\[
\left(t^{\al-1}E_{\al,\al}(-\la_n\,t^\al)\right)'=\sum_{k=0}^\infty\f{(-\la_n)^k}{\Ga(\al(k+1))}(t^{\al(k+1)-1})'=\sum_{k=0}^\infty\f{(-\la_n)^k t^{\al(k+1)-2}}{\Ga(\al(k+1)-1)},
\]
we have
\[
u_n'(t)=\sum_{k=0}^\infty\f{(-\la_n)^k}{\Ga(\al(k+1)-1)}\int_0^t(t-\tau)^{\al(k+1)-2}(F(\,\cdot\,,\tau),\vp_n)\,\rd\tau.
\]
Then we obtain
\begin{align*}
& \quad\,\pa_{0+}^{\al-1}u_n(T)=J_{0+}^{2-\al}(u_n')(T)\\
& =\f1{\Ga(2-\al)}\int_0^T\f1{(T-t)^{\al-1}}\sum_{k=0}^\infty\f{(-\la_n)^k}{\Ga(\al(k+1)-1)}\int_0^t(t-\tau)^{\al(k+1)-2}(F(\,\cdot\,,\tau),\vp_n)\,\rd\tau\rd t\\
& =\int_0^T(F(\,\cdot\,,\tau),\vp_n)\f1{\Ga(2-\al)}\sum_{k=0}^\infty\f{(-\la_n)^n}{\Ga(\al(k+1)-1)}\int_\tau^T(T-t)^{1-\al}(t-\tau)^{\al(k+1)-2}\rd t\rd\tau\\
& =\int_0^T(F(\,\cdot\,,\tau),\vp_n)\f1{\Ga(2-\al)}\sum_{k=0}^\infty\f{(-\la_n)^n}{\Ga(\al(k+1)-1)}(T-\tau)^{\al k}\f{\Ga(2-\al)\Ga(\al(k+1)-1)}{\Ga(\al k+1)}\\
& =\int_0^T E_{\al,1}(-\la_n(T-t)^\al)(F(\,\cdot\,,t),\vp_n)\,\rd t,
\end{align*}
which takes identically the same form as \eqref{eq-Jwn} in the case of $0<\al\le1$. Then it suffices to investigate
\[
\|J_{0+}^{\lceil\al\rceil-\al}\pa_t^{\lceil\al\rceil-1}u(\,\cdot\,,T)\|_{L^2(\Om)}^2=\sum_{n=1}^\infty\left|\int_0^T E_{\al,1}(-\la_n(T-t)^\al)(F(\,\cdot\,,t),\vp_n)\,\rd t\right|^2.
\]

Since $F(\,\cdot\,,t)=0$ for $t>T_0$, for sufficiently large $T>0$ we choose $\ga>d/4$ arbitrarily to estimate
\begin{align*}
\|J_{0+}^{\lceil\al\rceil-\al}\pa_t^{\lceil\al\rceil-1}u(\,\cdot\,,T)\|_{L^2(\Om)}^2 & =\sum_{n=1}^\infty\left|\int_0^{T_0}E_{\al,1}(-\la_n(T-t)^\al)(F(\,\cdot\,,t),\vp_n)\,\rd t\right|^2\\
& \le\sum_{n=1}^\infty\max_{0\le t\le T_0}|(F(\,\cdot\,,t),\vp_n)|^2\left(\int_{T-T_0}^T|E_{\al,1}(-\la_n\,t^\al)|\,\rd t\right)^2\\
& \le\sum_{n=1}^\infty\f1{\la_n^{2\ga}}\max_{0\le t\le T_0}|\la_n^\ga(F(\,\cdot\,,t),\vp_n)|^2\left(\int_{T-T_0}^T\f{C\,\rd t}{1+\la_n\,t^\al}\right)^2\\
& \le\left(C\,T_0\|F\|_{C([0,T_0];\cD((-\tri)^\ga))}\right)^2\sum_{n=1}^\infty\f1{\la_n^{2\ga}}\f1{(1+\la_n(T-T_0)^\al)^2}\\
& \le C\left(\f{C\,T_0\|F\|_{C([0,T_0];\cD((-\tri)^\ga))}}{1+\la_1(T-T_0)^\al}\right)^2\longrightarrow0\quad(T\to\infty).
\end{align*}
Here we utilized \eqref{eq-est-ML} to estimate $E_{\al,1}(-\la_n\,t^\al)$. Meanwhile, by Courant and Hilbert \cite{CH53}, we know $\la_n\sim n^{2/d}$ and hence $\la_n^{2\ga}\sim n^{4\ga/d}$ with $4\ga/d>1$, so that $\sum_{n=1}^\infty\la_n^{-2\ga}$ converges. The proof of Lemma \ref{lem-asymp} is completed.
\end{proof}

\begin{rem}
For $\al=2$, the solution to the wave equation \eqref{eq-IBVP-u-infty} takes the more explicit form
\[
u(\,\cdot\,,t)=\sum_{n=1}^\infty u_n(t)\vp_n,\quad u_n(t):=\int_0^t\f{\sin(\sqrt{\la_n}(t-\tau))}{\sqrt{\la_n}}(F(\,\cdot\,,\tau),\vp_n)\,\rd\tau.
\]
For $T>T_0$ sufficiently large, we have
\[
\|u(\,\cdot\,,T)\|^2_{L^2(\Om)}=\sum_{n=1}^\infty |u_n(T)|^2=\left|\int_0^{T_0} \f{\sin(\sqrt{\la_n}(T-\tau))}{\sqrt{\la_n}}(F(\,\cdot\,,\tau),\vp_n)\,\rd\tau\right|^2
\]
which does not decay as $T\to\infty$, because the hyperbolic system \eqref{eq-IBVP-u-infty} with the non-absorbing reflecting boundary $\pa\Om$ is not dissipative. Hence, the results of Lemma $\ref{lem-asymp}$ do not carry over to the case of $\al=2$.
\end{rem}

\section{Proof of Theorem \ref{thm-unique}}\label{sec-proof}

This section is devoted to the proof of the first main theorem of this paper concerning the uniqueness for Problem \ref{prob-IMSP}. The key idea originates from the straightforward observation
\begin{equation}\label{eq-vanish}
(\pa_t+\bm p\cdot\nb)f(\bm x-\bm p t)=(\pa_t+\bm p\cdot\nb)(\pa_t+\bm q\cdot\nb)(f(\bm x-\bm p t)+g(\bm x-\bm q t))=0,
\end{equation}
which suggests the introduction of the following auxiliary functions
\begin{equation}\label{eq-auxiliary}
v:=\begin{cases}
J_{0+}^{1-\al}(\pa_t+\bm p\cdot\nb)u, & 0<\al\le1,\\
J_{0+}^{2-\al}(\pa_t+\bm p\cdot\nb)(\pa_t+\bm q\cdot\nb)u, & 1<\al\le2.
\end{cases}
\end{equation}
In such a manner, we can derive a homogeneous equation for $v$ from \eqref{eq-IBVP-u}--\eqref{eq-def-F}, so that Problem \ref{prob-IMSP} is reduced to an inverse problem on determining initial values. To clarify the argument, we deal with the cases of $0<\al\le1$, $1<\al<2$ and $\al=2$ separately.

\subsection{Case of $0<\al\le1$}

First we derive the governing equation for $v_1:=(\pa_t+\bm p\cdot\nb)u$. By Lemma \ref{lem-IBVP-u}, we know $v_1\in L^1(0,T;\cD((-\tri)^{1-\ve}))$ for any $\ve\in(0,1]$. Then by the governing equation in \eqref{eq-IBVP-u} and the definitions of $\pa_{0+}^\al$ and $D_{0+}^\al$, we utilize \eqref{eq-vanish} to formally calculate
\begin{align*}
0 & =(\pa_t+\bm p\cdot\nb)F=(\pa_t+\bm p\cdot\nb)(\pa_{0+}^\al-\tri)u\\
& =(\pa_t J_{0+}^{1-\al})\pa_t u+\pa_{0+}^\al(\bm p\cdot\nb u)-\tri(\pa_t u+\bm p\cdot\nb u)\\
& =D_{0+}^\al(\pa_t+\bm p\cdot\nb)u-\tri(\pa_t+\bm p\cdot\nb)u=(D_{0+}^\al-\tri)v_1,
\end{align*}
where we used Lemma \ref{lem-C-RL}(a) and $\bm p\cdot\nb u=0$ in $\Om\times\{0\}$ to replace $\pa_{0+}^\al(\bm p\cdot\nb u)=D_{0+}^\al(\bm p\cdot\nb u)$. On the other hand, it follows from Lemma \ref{lem-IBVP-u}(a) that we can pass $t\to0$ in \eqref{eq-IBVP-u} to obtain
\[
\lim_{t\to0}J_{0+}^{1-\al}\pa_t u(\,\cdot\,,t)=\lim_{t\to0}\pa_{0+}^\al u(\,\cdot\,,t)=\lim_{t\to0}(\tri u+F)(\,\cdot\,,t)=f\quad\mbox{in }\cD((-\tri)^{\f12-\ve}).
\]
Meanwhile, the weak singularity of $J_{0+}^{1-\al}$ implies $J_{0+}^{1-\al}(\bm p\cdot\nb u)\longrightarrow0$ in $\cD((-\tri)^{1-\ve})$ as $t\to0$. Therefore, we obtain
\[
\lim_{t\to0}J_{0+}^{1-\al}v_1(\,\cdot\,,t)=\lim_{t\to0}J_{0+}^{1-\al}\pa_t u(\,\cdot\,,t)+\lim_{t\to0}J_{0+}^{1-\al}(\bm p\cdot\nb u)(\,\cdot\,,t)=f\quad\mbox{in }\cD((-\tri)^{\f12-\ve}).
\]
Finally, by \eqref{eq-u-0} we have $v_1=0$ in $\om\times(0,T)$. Consequently, it reveals that $v_1$ satisfies an initial-boundary value problem for a time-fractional diffusion equation with the Riemann-Liouville derivative
\[
\begin{cases}
(D_{0+}^\al-\tri)v_1=0 & \mbox{in }\Om\times(0,T),\\
J_{0+}^{1-\al}v_1=f & \mbox{in }\Om\times\{0\},\\
v_1=\bm p\cdot\nb u & \mbox{on }\pa\Om\times(0,T)
\end{cases}
\]
with the additional information $v_1=0$ in $\om\times(0,T)$. If $\al=1$, then $D_{0+}^\al$ reduces to the usual first order derivative $\pa_t$ in time. Then the unique continuation of parabolic equations (e.g. \cite{SS87}) immediately implies $v_1\equiv0$ in $\Om\times(0,T)$ and thus $f\equiv0$ in $\Om$ as the initial value.

In the case of $0<\al<1$, owing to the assumption $\pa\om\supset\pa\Om$ we have $v_1=\bm p\cdot\nb u=0$ on $\pa\Om\times(0,T)$. By further introducing $v:=J_{0+}^{1-\al}v_1$, it is readily seen that
\[
0=J_{0+}^{1-\al}(D_{0+}^\al-\tri)v_1=\pa_{0+}^\al(J_{0+}^{1-\al}v_1)-\tri(J_{0+}^{1-\al}v_1)=(\pa_{0+}^\al-\tri)v\quad\mbox{in }\Om\times(0,T).
\]
In other words, $v$ satisfies
\[
\begin{cases}
(\pa_{0+}^\al-\tri)v=0 & \mbox{in }\Om\times(0,T),\\
v=f & \mbox{in }\Om\times\{0\},\\
v=0 & \mbox{on }\pa\Om\times(0,T).
\end{cases}
\]
Again, we have $v=0$ in $\om\times(0,T)$ by \eqref{eq-u-0}. Then the proof is completed by applying Lemma \ref{lem-UCP}.

\begin{rem}
For $\al=1$, the assumption $\pa\om\supset\pa\Om$ is not necessary because the unique continuation for parabolic equations holds regardless of the boundary condition. However, for $\al\not\in\BN$, the corresponding uniqueness results which are available to our problem are not known. 
\end{rem}


\subsection{Case of $1<\al<2$}

In a parallel manner to the case $0<\al\le 1$, we introduce the auxiliary functions
\[
v_1:=(\pa_t+\bm q\cdot\nb)u,\quad v_2:=(\pa_t+\bm p\cdot\nb)v_1=(\pa_t+\bm p\cdot\nb)(\pa_t+\bm q\cdot\nb)u.
\]
By Lemma \ref{lem-IBVP-u}, we know $v_2\in L^1(0,T;\cD((-\tri)^{\f32-\f1\al-\ve}))$ for any $\ve\in(0,1]$. Similarly as before, we shall derive the governing equation for $v_2$. Since $\pa_t(\bm q\cdot\nb u)=0$ in $\Om\times\{0\}$, we have $\pa_{0+}^\al(\bm q\cdot\nb u)=(\pa_t J_{0+}^{2-\al}\pa_t)(\bm q\cdot\nb u)$ by Lemma \ref{lem-C-RL}(b). Then
\begin{align}
(\pa_t+\bm q\cdot\nb)(\pa_{0+}^\al-\tri)u & =(\pa_t J_{0+}^{2-\al}\pa_t)\pa_t u+(\pa_t J_{0+}^{2-\al}\pa_t)(\bm q\cdot\nb u)
-\tri(\pa_t u+\bm q\cdot\nb u)\nonumber\\
& =(\pa_t J_{0+}^{2-\al}\pa_t-\tri)(\pa_t u+\bm q\cdot\nb u)=(\pa_t J_{0+}^{2-\al}\pa_t-\tri)v_1.\label{eq-govern-v1}
\end{align}
Further, by $v_1=0$ in $\Om\times\{0\}$, Lemma \ref{lem-C-RL}(b) implies $\pa_t J_{0+}^{2-\al}\pa_t v_1=D_{0+}^\al v_1$. Then it follows from \eqref{eq-vanish} and \eqref{eq-govern-v1} that
\begin{align*}
0 & =(\pa_t+\bm p\cdot\nb)(\pa_t+\bm q\cdot\nb)F=(\pa_t+\bm p\cdot\nb)(\pa_t+\bm q\cdot\nb)(\pa_{0+}^\al-\tri)u\\
& =(\pa_t+\bm p\cdot\nb)(\pa_t J_{0+}^{2-\al}\pa_t-\tri)v_1=D_{0+}^\al(\pa_t v_1+\bm p\cdot\nb v_1)-\tri(\pa_t v_1+\bm p\cdot\nb v_1)\\
& =(D_{0+}^\al-\tri)v_2.
\end{align*}

Next, we turn to the initial condition of $v_2$, which involves $J_{0+}^{2-\al}v_2$ and $D_{0+}^{\al-1}v_2$. By the definition of $v_2$ and repeated uses of Lemma \ref{lem-C-RL}(b), we see
\begin{equation}\label{eq-IC-1}
\begin{aligned}
J_{0+}^{2-\al}v_2 & =\pa_{0+}^\al u+J_{0+}^{2-\al}((\bm p+\bm q)\cdot\nb\pa_t u)+J_{0+}^{2-\al}(\bm p\cdot\nb(\bm q\cdot\nb u)),\\
D_{0+}^{\al-1}v_2 & =\pa_t J_{0+}^{2-\al}v_2=\pa_t(\pa_{0+}^\al u)+(\bm p+\bm q)\cdot(\pa_t J_{0+}^{2-\al}\nb\pa_t u)+\pa_t J_{0+}^{2-\al}(\bm p\cdot\nb(\bm q\cdot u))\\
& =\pa_t(\pa_{0+}^\al u)+(\bm p+\bm q)\cdot\nb(\pa_{0+}^\al u)+J_{0+}^{2-\al}(\bm p\cdot\nb(\bm q\cdot\pa_t u)).
\end{aligned}
\end{equation}
Again by Lemma \ref{lem-IBVP-u}, we employ the governing equation of \eqref{eq-IBVP-u} and pass $t\to0$ to find
\begin{equation}\label{eq-IC-2}
\begin{alignedat}{2}
\lim_{t\to0}\pa_{0+}^\al u(\,\cdot\,,t) & =\lim_{t\to0}(\tri u+F)(\,\cdot\,,t)=f+g & \quad & \mbox{in }\cD((-\tri)^{1-\ve}),\\
\lim_{t\to0}\pa_t(\pa_{0+}^\al u)(\,\cdot\,,t) & =\lim_{t\to0}(\tri\pa_t u+\pa_t F)(\,\cdot\,,t)\\
& =-\bm p\cdot\nb f-\bm q\cdot\nb g & \quad & \mbox{in }\cD((-\tri)^{1-\f1\al-\ve}).
\end{alignedat}
\end{equation}
On the other hand, the weak singularity of $J_{0+}^{2-\al}$ and Lemma \ref{lem-IBVP-u} guarantee
\begin{equation}\label{eq-IC-3}
\begin{aligned}
J_{0+}^{2-\al}u(\,\cdot\,,t) & \longrightarrow0\quad\mbox{in }\cD((-\tri)^{2-\ve}),\\
J_{0+}^{2-\al}\pa_t u(\,\cdot\,,t) & \longrightarrow0\quad\mbox{in }\cD((-\tri)^{2-\f1\al-\ve})
\end{aligned}\quad\mbox{as }t\to0.
\end{equation}
Applying \eqref{eq-IC-2} and \eqref{eq-IC-3} and passing $t\to0$ in \eqref{eq-IC-1}, we obtain
\begin{alignat*}{2}
\lim_{t\to0}J_{0+}^{2-\al}v_2(\,\cdot\,,t) & =f+g & \quad & \mbox{in }\cD((-\tri)^{\f32-\f1\al-\ve}),\\
\lim_{t\to0}D_{0+}^{\al-1}v_2(\,\cdot\,,t) & =-\bm p\cdot\nb f-\bm q\cdot\nb g+(\bm p+\bm q)\cdot\nb(f+g)\\
& =\bm q\cdot\nb f+\bm p\cdot\nb g & \quad & \mbox{in }\cD((-\tri)^{1-\f1\al-\ve}).
\end{alignat*}

In conclusion, again it turns out that $v_2$ satisfies the following initial-boundary value problem
\[
\begin{cases}
(D_{0+}^\al-\tri)v_2=0 & \mbox{in }Q,\\
J_{0+}^{2-\al}v_2=f+g,\ D_{0+}^{\al-1}v_2=\bm q\cdot\nb f+\bm p\cdot\nb g & \mbox{in }\Om\times\{0\},\\
v_2=0 & \mbox{on }\pa\Om\times(0,T).
\end{cases}
\]
with the additional information $v_2=0$ in $\om\times(0,T)$ from \eqref{eq-u-0}. Similarly to the case of $0<\al<1$, we further introduce $v:=J_{0+}^{2-\al}v_2$. Then it is readily seen that $v$ satisfies
\begin{equation}\label{eq-IBVP-v}
\begin{cases}
(\pa_{0+}^\al-\tri)v=0 & \mbox{in }\Om\times(0,T),\\
v=f+g,\ \pa_t v=\bm q\cdot\nb f+\bm p\cdot\nb g & \mbox{in }\Om\times\{0\},\\
v=0 & \mbox{on }\pa\Om\times(0,T)
\end{cases}
\end{equation}
with $v=0$ in $\om\times(0,T)$. Taking advantage of Lemma \ref{lem-UCP}, we conclude $f+g=\bm q\cdot\nb f+\bm p\cdot\nb g\equiv0$ in $\Om$. Plugging $g=-f$ in $\bm q\cdot\nb f+\bm p\cdot\nb g=0$ yields $(\bm p-\bm q)\cdot\nb f=0$ in $\Om$, which means that $f$ is a constant along the direction $\bm p-\bm q$. Since we assumed $f\in H_0^1(\Om)$, it should vanish on the boundary, which indicates the vanishing of this constant. In other words, we arrived at $f=g\equiv0$ in $\Om$.

\subsection{Case of $\al=2$}\label{sec:4.2}

Identically parallel to the case of $1<\al<2$, we can introduce $v:=(\pa_t+\bm p\cdot\nb)(\pa_t+\bm q\cdot\nb)u$ and verify that $v$ satisfies (see \eqref{eq-IBVP-v})
\begin{equation}\label{eq-IBVP-v1}
\begin{cases}
(\pa_t^2-\tri)v=0 & \mbox{in }\Om\times(0,T),\\
v=h_0,\ \pa_t v=h_1 & \mbox{in }\Om\times\{0\},\\
v=0 & \mbox{in }\om\times(0,T)
\end{cases}
\end{equation}
with
\[
h_0:=f+g\in H_0^1(\Om),\quad h_1:=\bm q\cdot\nb f+\bm p\cdot\nb g\in L^2(\Om).
\]

Under the conditions in Theorem \ref{thm-unique}(b), it is known that $v=0$ in $\om\times(0,T)$ yields $v(\,\cdot\,,0)=h_0=0$ and $\pa_t v(\,\cdot\,,0)=h_1=0$ in $\Om$ by noting that $v=0$ in $\om\times(0,T)$ yields $v=\pa_{\bm\nu}v=0$ on $\pa\Om\times(0,T)$. We refer to Komornik \cite{Ko} for example, which establishes the stability called an observability inequality implying the desired uniqueness. Thus the proof of Theorem \ref{thm-unique} in the case $\al=2$ is   finished.

As for the uniqueness in determining $h_0$ and $h_1$, we know sharp results, for example, Fritz John's global Holmgren theorem (e.g., Section 1.8 of Chapter 1 in Rauch \cite{Ra}) for a hyperbolic equation with analytic coefficients, but we do not need such sharp uniqueness for our proof.

\section{Proof of Theorem \ref{TH:4} }\label{sec:5}

Due to the linearity of the inverse problem, it suffices to assume $u=\pa_{\bm\nu}u=0$ on $\pa\Om\times(0,\infty)$ and conclude $f=0$ in $\Om$.

For arbitrarily fixed $T>0$, define the test function
\begin{equation}\label{test}
v_T(\bm x,t;\bm\xi):=\e^{-\ri\,\bm\xi\cdot\bm x}(T-t)^{\al-1}E_{\al,\al}(-|\bm\xi|^2(T-t)^\al).
\end{equation}
Then it is readily seen that $\tri v_T(\bm x,t;\bm\xi)=-|\bm\xi|^2v_T(\bm x,t;\bm\xi)$. For $0<\al\le1$, direct calculations yield
\[
J_{T-}^{1-\al}v_T(\bm x,t;\bm\xi)=\e^{-\ri\,\bm\xi\cdot\bm x}E_{\al,1}(-|\bm\xi|^2(T-t)^\al)
\]
and thus
\begin{equation}\label{eq-bRL-vT1}
J_{T-}^{1-\al}v_T(\bm x,T;\bm\xi)=\e^{-\ri\,\bm\xi\cdot\bm x},\quad D_{T-}^\al v_T(\bm x,t;\bm\xi)=|\bm\xi|^2v_T(\bm x,t;\bm\xi).
\end{equation}
For $1<\al\le2$, similar calculations yields
\begin{align*}
& J_{T-}^{2-\al}v_T(\bm x,t;\bm\xi)=\e^{-\ri\,\bm\xi\cdot\bm x}(T-t)E_{\al,2}(-|\bm\xi|^2(T-t)^\al),\\
& D_{T-}^{\al-1}v_T(\bm x,t;\bm\xi)=\e^{-\ri\,\bm\xi\cdot\bm x}E_{\al,1}(-|\bm\xi|^2(T-t)^\al)
\end{align*}
and thus
\begin{equation}\label{eq-bRL-vT2}
\begin{aligned}
& J_{T-}^{2-\al}v_T(\bm x,T;\bm\xi)=0,\quad D_{T-}^{\al-1}v_T(\bm x,T;\bm\xi)=-\e^{-\ri\,\bm\xi\cdot\bm x},\\
& D_{T-}^\al v_T(\bm x,t;\bm\xi)=-|\bm\xi|^2v_T(\bm x,t;\bm\xi).
\end{aligned}
\end{equation}

Based on the test function $v_T$, we investigate the integral
\[
\Phi_T(\bm\xi):=\int_0^T\!\!\!\int_\Om(\pa_{0+}^\al u-\tri u)\,v_T(\,\cdot\,,\,\cdot\,;\bm\xi)\,\rd\bm x\rd t.
\]
By \eqref{eq-IBVP-u} and \eqref{eq:15}, we have
\begin{align*}
\Phi_T(\bm\xi) & =\int_0^T\!\!\!\int_\Om f(\bm x-\bm\rho(t))h(t)\,\e^{-\ri\,\bm\xi\cdot\bm x}(T-t)^{\al-1}E_{\al,\al}(-|\bm\xi|^2(T-t)^\al)\,\rd\bm x\rd t\\
& =\int_0^T\left(\int_\Om f(\bm x-\bm\rho(t))\,\e^{-\ri\,\bm\xi\cdot\bm x}\rd\bm x\right)h(t)(T-t)^{\al-1}E_{\al,\al}(-|\bm\xi|^2(T-t)^\al)\,\rd t.
\end{align*}
Since $f(\bm x-\bm\rho(t))=0$ for all
$\bm x\in\BR^d\setminus\ov\Om$ and $t\in(0,\infty)$, it holds that
\[
\int_\Om f(\bm x-\bm\rho(t))\,\e^{-\ri\,\bm\xi\cdot\bm x}\rd\bm x=\int_{\BR^d}f(\bm x-\bm\rho(t))\,\e^{-\ri\,\bm\xi\cdot\bm x}\rd\bm x=(2\pi)^{d/2}\wt f(\bm\xi)\,\e^{-\ri\,\bm\xi\cdot\bm\rho(t)},
\]
where $\wt f(\bm\xi)=(2\pi)^{-d/2}\int_{\BR^d}f(\bm x)\,\e^{-\ri\,\bm\xi\cdot\bm x}\rd\bm x$ denotes the Fourier transform of $f$. Therefore, we obtain
\begin{equation}\label{eq-PhiT-1}
\Phi_T(\bm\xi)=(2\pi)^{d/2}\wt f(\bm\xi)I_T(\bm\xi),
\end{equation}
where
\begin{equation}\label{eq-def-IT}
I_T(\bm\xi):=\int_0^T\e^{-\ri\,\bm\xi\cdot\bm\rho(t)}h(t)(T-t)^{\al-1}E_{\al,\al}(-|\bm\xi|^2(T-t)^\al)\,\rd t.
\end{equation}

To further treat $\Phi_T(\bm\xi)$, we consider the cases of $0<\al\le1$, $1<\al<2$ and $\al=2$ separately.\medskip

{\bf Case 1 } For $0<\al\le1$ we employ integration by parts and formula \eqref{eq-frac-int1} in Lemma \ref{lem-frac-int} to calculate
\begin{align*}
\Phi_T(\bm\xi) & =\int_\Om\!\int_0^T(\pa_{0+}^\al u)v_T\,\rd t\rd\bm x-\int_0^T\!\!\!\int_\Om(\tri u)v_T\,\rd\bm x\rd t\\
& =\int_\Om\left\{\left[u(J_{T-}^{1-\al}v_T)\right]_0^T-\int_0^T u(D_{T-}^\al v_T)\,\rd t\right\}\rd\bm x\\
& \quad\,+\int_0^T\left\{\int_{\pa\Om}(u\,\pa_{\bm\nu}v_T-v_T\,\pa_{\bm\nu}u)\,\rd\bm\si-\int_\Om u(\tri v_T)\,\rd\bm x\right\}\rd t\\
& =\int_\Om\left(u(J_{T-}^{1-\al}v_T)\right)(\,\cdot\,,T)\,\rd\bm x-\int_0^T\!\!\!\int_\Om u\,(D_{T-}^\al v_T+\tri v_T)\,\rd\bm x\rd t\\
& =\int_\Om u(\bm x,T)\,\e^{-\ri\,\bm\xi\cdot\bm x}\rd\bm x.
\end{align*}
Here we used the facts that $u=0$ in $\Om\times\{0\}$, $u=\pa_{\bm\nu}u=0$ on $\pa\Om\times(0,\infty)$ and $(D_{T-}^\al+\tri)v_T=0$ due to \eqref{eq-bRL-vT1}. Combining the above equality with \eqref{eq-PhiT-1} indicates
\begin{equation}\label{eq-PhiT-2}
(2\pi)^{d/2}\wt f(\bm\xi)I_T(\bm\xi)=\int_\Om u(\bm x,T)\,\e^{-\ri\,\bm\xi\cdot\bm x}\rd\bm x
\end{equation}
for all $T>0$ and $\bm\xi\in\BR^d$. Further taking $J_{0+}^{1-\al}$ on both sides of \eqref{eq-PhiT-2} and passing $T\to\infty$, we obtain
\begin{equation}\label{eq-PhiT-3}
(2\pi)^{d/2}\wt f(\bm\xi)\lim_{T\to\infty}J_{0+}^{1-\al}I_T(\bm\xi)=\lim_{T\to\infty}\int_\Om J_{0+}^{1-\al}u(\bm x,T)\,\e^{-\ri\,\bm\xi\cdot\bm x}\rd\bm x,\quad\bm\xi\in\BR^d.
\end{equation}
For the right-hand side of \eqref{eq-PhiT-3}, it follows from Lemma \ref{lem-asymp} that
\begin{align}
\left|\int_\Om J_{0+}^{1-\al}u(\bm x,T)\,\e^{-\ri\,\bm\xi\cdot\bm x}\rd\bm x\right| & \le\|J_{0+}^{1-\al}u(\,\cdot\,,T)\|_{L^1(\Om)}\nonumber\\
& \le\sqrt{|\Om|}\,\|J_{0+}^{1-\al}u(\,\cdot\,,T)\|_{L^2(\Om)}\longrightarrow0\quad(T\to\infty).\label{eq-PhiT-4}
\end{align}

Now we investigate $\lim_{T\to\infty}J_{0+}^{1-\al}I_T(\bm\xi)$ where $I_T(\bm\xi)$ was defined in \eqref{eq-def-IT}. Taking $\bm\xi=\bm0$ in $I_T(\bm\xi)$ gives
\[
J_{0+}^{1-\al}I_T(\bm0)=J_{0+}^{1-\al}\left\{\f1{\Ga(\al)}\int_0^T h(t)(T-t)^{\al-1}\rd t\right\}=J_{0+}^{1-\al}(J_{0+}^\al h)(T)=\int_0^T h(t)\,\rd t.
\]
Then it follows immediately from the assumption of $h$ that
\[
\lim_{T\to\infty}J_{0+}^{1-\al}I_T(\bm0)=\int_0^\infty h(t)\,\rd t\ne0.
\]
Now that $\lim_{T\to\infty}J_{0+}^{1-\al}I_T(\bm\xi)$ is independent of $T$ and is continuous with respect to $\bm\xi$, there exists a sufficiently small constant $\ep>0$ such that
\[
\lim_{T\to\infty}J_{0+}^{1-\al}I_T(\bm\xi)\ne0,\quad\forall\,\bm\xi\in B_\ep.
\]
In view of \eqref{eq-PhiT-3} and \eqref{eq-PhiT-4}, we obtain $\wt f=0$ in $B_\ep$. Finally, the analyticity of $\wt f$ in $\bm\xi\in\BR^d$ implies $\wt f=0$ in $\BR^d$ and consequently $f=0$ in $\Om$.\medskip

{\bf Case 2 } Now we turn to the case of $1<\al<2$. In a similar manner as before, we employ formula \eqref{eq-frac-int2} in Lemma \ref{lem-frac-int} to calculate
\begin{align*}
\Phi_T(\bm\xi) & =\int_\Om\left\{\left[(\pa_t u)J_{T-}^{2-\al}v_T\right]_0^T-\int_0^T(\pa_t u)D_{T-}^\al v_T\,\rd t\right\}\rd\bm x\\
& \quad\,+\int_0^T\left\{\int_{\pa\Om}(u\,\pa_{\bm\nu}v_T-v_T\,\pa_{\bm\nu}u)\,\rd\bm\si-\int_\Om u(\tri v_T)\,\rd\bm x\right\}\rd t\\
& =\int_\Om\left\{\left[u(D_{T-}^{\al-1}v_T)\right]_T^0+\int_0^T u(D_{T-}^\al v_T)\,\rd t\right\}-\int_0^T\!\!\!\int_\Om u(\tri v_T)\,\rd\bm x\rd t\\
& =-\int_\Om\left(u(D_{T-}^{\al-1}v_T)\right)(\,\cdot\,,T)\,\rd\bm x+\int_0^T\!\!\!\int_\Om u\,(D_{T-}^\al v_T-\tri v_T)\,\rd\bm x\rd t\\
& =\int_\Om u(\bm x,T)\,\e^{-\ri\,\bm\xi\cdot\bm x}\rd\bm x.
\end{align*}
Here we used the facts that $u=\pa_t u=0$ in $\Om\times\{0\}$, $u=\pa_{\bm\nu}u=0$ on $\pa\Om\times(0,\infty)$ and $(D_{T-}^\al-\tri)v_T=0$ due to \eqref{eq-bRL-vT2}. This means that we arrive at the same equality \eqref{eq-PhiT-2} as that in Case 1. Nevertheless, this time we take $\pa_{0+}^{\al-1}$ on both sides of \eqref{eq-PhiT-2} and pass $T\to\infty$ to obtain
\[
(2\pi)^{d/2}\wt f(\bm\xi)\lim_{T\to\infty}\pa_{0+}^{\al-1}I_T(\bm\xi)=\lim_{T\to\infty}\int_\Om\pa_{0+}^{\al-1}u(\bm x,T)\,\e^{-\ri\,\bm\xi\cdot\bm x}\rd\bm x,\quad\bm\xi\in\BR^d.
\]
Again the right-hand side vanishes due to Lemma \ref{lem-asymp}. Similarly as before, we take $\bm\xi=\bm0$ in $\pa_{0+}^{\al-1}I_T(\bm\xi)$ to see
\begin{align*}
\pa_{0+}^{\al-1}I_T(\bm0) & =J_{0+}^{2-\al}\left\{\f1{\Ga(\al)}\f\rd{\rd T}\int_0^T h(t)(T-t)^{\al-1}\rd t\right\}\\
& =J_{0+}^{2-\al}\left\{\f1{\Ga(\al-1)}\int_0^T h(t)(T-t)^{\al-2}\rd t\right\}=J_{0+}^{2-\al}(J_{0+}^{\al-1}h)(T)=\int_0^T h(t)\,\rd t.
\end{align*}
By the same continuity argument, we can conclude $\wt f=0$ in a neighborhood of $\bm0$ and eventually complete the proof.\medskip

{\bf Case 3 } Consider the last case $\al=2$. We recall that $\supp\,f\subset\Om$. In this case, the test function \eqref{test} takes the form
\[
v_T(\bm x,t;\bm\xi)=\e^{-\ri\bm\xi\cdot\bm x}\f{\sin(|\bm\xi|(T-t))}{|\bm\xi|},
\]
which is not well-defined if $|\bm\xi|=0$. For wave equations, we prefer to use another test function defined by
\[
v(\bm x,t;\bm\xi)=\e^{-\ri|\bm\xi|t}\e^{-\ri\bm\xi\cdot\bm x},\quad(\bm x,t)\in\BR^d\times(0,\infty),\quad\bm\xi\in\BR^d.
\]
which satisfies the wave equation $(\pa_t^2-\tri)v(\bm x,t;\bm\xi)=0$ for $\bm x,\bm\xi\in\BR^d$. As done in Cases 1 and 2, it follows from \eqref{eq:15} that
\begin{equation}\label{eq:16}
\int_0^T h(t)\int_\Om f(\bm x-\bm\rho(t))\,v(\bm x,t;\bm\xi)\,\rd\bm x\rd t=\int_0^T\!\!\!\int_\Om(\pa_t^2-\tri)u(\bm x,t)\,v(\bm x,t;\bm\xi)\,\rd\bm x\rd t.
\end{equation}
Inserting the expression of $v$ to the left hand side, we obtain
\[
\int_0^T h(t)\int_\Om f(\bm x-\bm\rho(t))\,v(\bm x,t;\bm\xi)\,\rd\bm x\rd t=(2\pi)^{d/2}\wt f(\bm\xi)\int_0^T\e^{-\ri\bm\xi\cdot\bm\rho(t)}\,h(t)\,\rd t,
\]
where $\wt f$ again denotes the Fourier transform of the source function $f$. For the right hand side, using integration by parts together with the vanishing of the Cauchy data $(u,\pa_{\bm\nu}u)$ on $\pa\Om\times(0,\infty)$, we obtain
\begin{align*}
\int_0^T\!\!\!\int_\Om\pa_t^2u(\bm x,t)\,v(\bm x,t;\bm\xi)\,\rd\bm x\rd t & =\int_\Om((\pa_t u)v-u\,\pa_t v)|_{t=T}\,\rd\bm x\\
& \quad\,+\int_0^T\!\!\!\int_\Om\pa_t^2v(\bm x,t;\bm\xi)\,u(\bm x,t)\,\rd\bm x\rd t,\\
\int_0^T\!\!\!\int_\Om\tri u(\bm x,t)\,v(\bm x,t;\bm\xi)\,\rd\bm x\rd t & =\int_0^T\!\!\!\int_\Om u(\bm x,t)\,\tri v(\bm x,t;\bm\xi)\,\rd\bm x\rd t,
\end{align*}
implying that
\[
\int_0^T\!\!\!\int_\Om(\pa_t^2-\tri )u(\bm x,t)\,v(\bm x,t;\bm\xi)\,\rd\bm x\rd t=\int_\Om((\pa_t u)v-u\,\partial_t v)|_{t=T}\,\rd\bm x.
\]
Hence, the identity \eqref{eq:16} can be rewritten as
\[
(2\pi)^{d/2}\wt f(\bm\xi)\int_0^T\e^{-\ri\bm\xi\cdot\bm\rho(t)}h(t)\,\rd t=\int_\Om((\pa_t u)v-u\,\pa_t v)|_{t=T}\,\rd\bm x.
\]
Since $u=\pa_{\bm \nu}u=0$ on $\pa\Om\times(0,\infty)$, one may extend $u(\bm x,t)$ from $\Om\times\BR_+$ to $(\BR^d\setminus\ov\Om)\times\BR_+$ by zero. Hence, $u$ can be given explicitly by the convolution of the fundamental solution of the wave equation and the source term. This implies that the right hand side of the previous identity tends to zero as $T\to\infty$, because $\|u(\,\cdot\,,T)\|_{L^2(\Om)}\longrightarrow0$ and $\|\pa_t u(\,\cdot\,,T)\|_{L^2(\Om)}\longrightarrow0$ as $T\to\infty$. Since $h$ is compactly supported in $[0,T_0]$, letting $T\rightarrow\infty$ we arrive at
\[
(2\pi)^{d/2}\wt f(\bm\xi)\int_0^{T_0}\e^{-\ri\bm\xi\cdot\bm\rho(t)}h(t)\,\rd t=0
\]
for all $|\bm\xi|<\ep$. By the assumption that $\int_0^\infty h(t)\,\rd t\ne0$, we thus obtain $\wt f(\bm\xi)=0$, provided $|\bm \xi|$ is sufficiently small. By the analyticity of the function $\bm\xi\longmapsto\wt f(\bm\xi)$, we have $\wt f\equiv0$ in $\BR^d$ and thus $f\equiv0$ by taking the inverse Fourier transform.

\begin{rem}
In Theorem \ref{TH:4}, the non-vanishing assumption $\int_0^\infty h(t)\,\rd t\ne0$ can be replaced by other conditions. For example, in the case $\al=2$ (wave equation), one can apply the moment theory to prove the unique determination of the orbit $\bm\rho(t)\in\BR^3$ from boundary Cauchy data, provided the starting position of the moving source is known and the source moves more slowly than the wave speed. A detailed argument in electromagnetism can be found in \cite[Theorem 4.2]{HKLZ2019}. We conjecture that the moment theory also applies to the fractional diffusion-wave equation under additional assumptions on the orbit function $\bm\rho(t)$ and the temporal function $h(t)$.
\end{rem}


\section{Concluding remarks}\label{sec-conclusion}

In this paper, we mainly investigated an inverse moving source problem on determining source profiles in (time-fractional) evolution equations, provided that the sources move along given constant vectors. Under some assumptions on the observation subdomain $\om$ and the observation time $T$, we proved the unique determination of at most $\lceil\al\rceil$ unknown profiles, where $\al\in(0,2]$ is fractional derivative order. The key to the proof turns out to be a reduction to an inverse problem for initial conditions by introducing auxiliary functions \eqref{eq-auxiliary}. Then for the homogeneous problems, we employ a vanishing property for $0<\al<2$ and a Carleman estimate argument for $\al=2$ to conclude the uniqueness. Using boundary Cauchy data over infinite time, unique determination of source profiles with a general orbit and the track of a moving point source along a hyperplane in 3D is also discussed.

We close this paper by mentioning several possible future topics on inverse moving source problems. In this work, the unknown sources are assumed to move along straight lines, which seems unrealistic in most situations. Therefore, it is preferable to remove this assumption and consider general given orbits (for instance, sources moving along a plane in $\BR^3$). Similarly, the observation subdomain $\om$ is assumed to cover the whole boundary, which also looks restrictive. We shall attempt to relax this condition by seeking new vanishing property which does not require homogeneous boundary conditions. Meanwhile, another related issue is to study the same problem by using partial boundary Cauchy data. Finally, in the light of practical applications, it is necessary to develop corresponding numerical methods and perform numerical verifications.
\bigskip

{\bf Acknowledgement}\ \ The authors thank the anonymous referees for their valuable comments. This paper has been supported by Grant-in-Aid for Scientific Research (S) 15H05740, Japan Society for the Promotion of Science (JSPS) and by the RUDN University Strategic Academic Leadership Program. Y.\! Liu is supported by Grant-in-Aid for Early-Career Scientists 20K14355, JSPS. G.\! Hu is partly supported by National Natural Science Foundation of China (NSFC) (No.\! 12071236) and Fundamental Research Funds for the Central Universities in China. M.\! Yamamoto is supported by Grant-in-Aid for Scientific Research (A) 20H00117, JSPS and by NSFC (Nos.\! 11771270, 91730303).


\end{document}